\pdfoutput=1
\RequirePackage{snapshot}
\documentclass[10pt]{iopart}

\usepackage[utf8]{inputenc}
\usepackage[T1]{fontenc}

\usepackage[pdftex]{hyperref}
\urlstyle{same}
\usepackage{setstack}
\usepackage{tikz}
\usepackage{pgfplots}

% neuer Befehl: \includegraphicstotab[..]{..}
% Verwendung analog wie \includegraphics
\newlength{\myx} % Variable zum Speichern der Bildbreite
\newlength{\myy} % Variable zum Speichern der Bildhöhe
\newcommand\includegraphicstotab[2][\relax]{%
% Abspeichern der Bildabmessungen
\settowidth{\myx}{\includegraphics[{#1}]{#2}}%
\settoheight{\myy}{\includegraphics[{#1}]{#2}}%
% das eigentliche Einfügen
\parbox[c][1.1\myy][c]{\myx}{%
\includegraphics[{#1}]{#2}}%
}% Ende neuer Befehl

\usepgfplotslibrary{external} 
\tikzexternalize[prefix=pgfplots/]
 % to not export next picture

\expandafter\let\csname equation*\endcsname\relax 
\expandafter\let\csname endequation*\endcsname\relax 
\expandafter\let\csname leftroot\endcsname\relax 
\expandafter\let\csname uproot\endcsname\relax 
\expandafter\let\csname boxed\endcsname\relax
\expandafter\let\csname dddot\endcsname\relax
\expandafter\let\csname ddddot\endcsname\relax
\expandafter\let\csname overset\endcsname\relax 
\expandafter\let\csname underset\endcsname\relax 
\expandafter\let\csname sideset\endcsname\relax 
\expandafter\let\csname subarray\endcsname\relax 
\expandafter\let\csname endsubarray\endcsname\relax
\expandafter\let\csname substack\endcsname\relax
\usepackage{amsmath,amssymb,epsfig}

\usepackage{xcolor}

\usepackage{amsthm}

\newtheorem{theorem}{Theorem}[section]

\newtheorem{lemma}[theorem]{Lemma}

\theoremstyle{definition}

% colon centering alogn the equals-sign
\mathchardef\ordinarycolon\mathcode`\:
\mathcode`\:=\string"8000\def\R{\mathbb{R}}
\begingroup \catcode`\:=\active\def\Z{\mathbb{Z}}
  \gdef:{\mathrel{\mathop\ordinarycolon}}\def\N{\mathbb{N}}
\endgroup

% mathbb letters

\newcommand{\N}{\mathbb{N}}
\newcommand{\Z}{\mathbb{Z}}

\renewcommand{\R}{\mathbb{R}}

\usepackage{mathrsfs}
%----- Mathscript ------

%--------------------

\newcommand{\Fc}{\mathcal{F}}

\newcommand{\Mc}{\mathcal{M}}
\newcommand{\Nc}{\mathcal{N}}
\newcommand{\Oc}{\mathcal{O}}

\newcommand{\Rc}{\mathcal{R}}

%----------- New Math. Symbols ------------
\newcommand{\bdot}{\boldsymbol{\cdot}}                % Mittelgr. zentralerPunkt
%\newcommand{\varprod}{\mathop{\vcenter{\hbox{\Huge $\times$}}}}  
 						  %      
                         % Wavefront set
\newcommand{\norm}[1]{\left\lVert#1\right\rVert}      % Norm
              % Norm
\newcommand{\abs}[1]{\left|#1\right|}                 % Absolutbetrag
\newcommand{\paren}[1]{\left(#1\right)}               % Klammern
               % eckige Klammern
		      % Mengenklammer
             % eckige Klammern
       % Skalarprodukt
  % lineare H"ulle
            % abgeschl. lin. H"ulle
                  % Glattheitsmodul
     % range
     % domain

   % kernel
\renewcommand{\d}{\,\mathrm{d}}						  % within the integral sign
\newcommand{\conv}{\ast}
           % support
 % singular support
           %

\DeclareMathOperator*{\argmin}{\mathrm{arg\:min}}
 % Sine integral

 % Cauchy principal value

%---------- Symbols -----------------

\renewcommand{\epsilon}{\varepsilon}
\renewcommand{\rho}{\varrho}
%\renewcommand{\phi}{\varphi}

%\renewcommand{\theta}{\vartheta}
%\renewcommand{\leq}{\leqslant}
%\renewcommand{\geq}{\geqslant}

%\renewcommand{\hat}{\widehat}

%------------------------------------

%------ Function Spaces ------
% \def\LR{L^1(\R)}
% \def\LsqR{L^2(\R)}
% \def\LsqRw{L^2\left(\R^2,\frac{da db}{a^2}\right)}

%----------- Equations --------------------------
% \def\eqa{\begin{equation}}
% \def\eqe{\end{equation}}
% \def\eqna{\begin{equation*}}
% \def\eqne{\end{equation*}}
% \def\eqnarraya{\begin{eqnarray}}
% \def\eqnarraye{\end{eqnarray}}
% \def\eqnarrayna{\begin{eqnarray*}}
% \def\eqnarrayne{\end{eqnarray*}}
%------------------------------------------------

% TODO: \addchap funktioniert, es kommt jedoch eine Warnung
% --> spter evtl. wieder aendern
% Obsolete: use \addchap, \addsec instead!
% \newcommand{\nonumberchapter}[1]{%
% 	\chapter*{#1}%
% 	\addcontentsline{toc}{chapter}{#1}%
% }

% My marks

%Todd's commands

%\newcommand{\st}{\hskip 0.3mm\big | \hskip 0.3mm}

\newcommand{\be}{\begin{equation}}
\newcommand{\ee}{\end{equation}}
\newcommand{\bea}{\begin{eqnarray}}
\newcommand{\eea}{\end{eqnarray}}
\newcommand{\bean}{\begin{eqnarray*}}
\newcommand{\eean}{\end{eqnarray*}}

\newcommand{\bel}[1]{\begin{equation}\label{#1}}

\newcommand{\eel}[1]{{\label{#1}\end{equation}}}

%{{\operatorname{bd}}}

\newcommand{\prox}{\operatorname{prox}}

\usepackage[babel]{csquotes}
\usepackage[labelfont=bf,font=small,labelsep=space]{caption}
\usepackage[font=footnotesize]{subcaption}
\usepackage{array,booktabs, tabularx}
\usepackage{longtable}

\setcounter{totalnumber}{2}

\usepackage{pgfplots}

\newcommand{\RI}{\operatorname{RI}}
\DeclareMathOperator*{\minimize}{minimize}

\begin{document}

\title{Joint Image Reconstruction and Segmentation Using the Potts Model}

\author{Martin Storath$^1$,  Andreas Weinmann$^{2,4}$, J\"urgen Frikel$^{3,4}$,  Michael Unser$^1$}
\address{
$^1$Biomedical Imaging Group, École Polytechnique Fédérale de Lausanne, Switzerland \\
 $^2$Department of Mathematics, Technische Universit\"at M\"unchen, Germany \\
 $^3$Department of Mathematics, Tufts University, USA \\
 $^4$Research Group Fast Algorithms for Biomedical Imaging, Helmholtz Zentrum M\"unchen, Germany
}
\ead{martin.storath@epfl.ch, andreas.weinmann@tum.de, juergen.frikel@helmholtz-muenchen.de,  michael.unser@epfl.ch}

\begin{abstract}
We propose a new algorithmic approach to the non-smooth and non-convex
Potts problem (also called piecewise-constant Mumford-Shah problem) 
for inverse imaging problems.
We derive a suitable splitting into specific subproblems that can all be solved efficiently.
Our method does not require a priori knowledge  
on the gray levels nor on the number of segments of the reconstruction.
Further, it avoids anisotropic artifacts such as geometric staircasing.
We demonstrate the suitability of our method for joint image reconstruction and segmentation.
We focus on Radon data,
where we in particular consider limited data situations.
For instance, our method is able to recover all segments of the Shepp-Logan phantom 
from $7$ angular views only.
We illustrate the practical applicability on a real PET dataset.
As further applications, we consider spherical Radon data 
as well as blurred data.
\end{abstract}

\vspace{2ex} \noindent{\it Keywords}: Potts model, piecewise-constant Mumford-Shah model, regularization of ill-posed problems, image segmentation,  Radon transform, spherical Radon transform, computed tomography, photoacoustic tomography, deconvolution.

\maketitle

%===================================================================================================
\section{Introduction}

In this paper, we consider ill-posed imaging problems with incomplete data.
Incomplete data are often due to technical restrictions or design issues of the imaging modality, like in
the case of freehand SPECT \cite{Wendler:2010cb} or limited-angle tomography \cite{Buehler:2011bv, Natterer86,Wang:2009uz}.
Also health-related considerations lead to incomplete data. 
For example, sparse-angle setups are used to reduce radiation doses in  x-ray tomography \cite{barkan2014}.
In addition to incompleteness, the data is usually corrupted by noise and may also suffer from blur \cite{ Hahn:il,  Kohr:2011cz, Rosenthal:2011ha, Wolf:2013cy}. 
Altogether, this makes the reconstruction problems severely ill-posed
which means that small perturbations in the data potentially lead to large errors in the reconstruction \cite{Davison83,Louis:1984vy, Louis_SVD_of_limited_angle_transform86, Natterer86}.
It may even happen that certain features (singularities) are invisible from the incomplete data \cite{ Frikel:2013gb,Katsevich:1997js, Quinto93}.
Consequently, 
the quality of reconstruction  decreases significantly 
and one tends to lose fine details.
Nevertheless, one can still try to reconstruct 
the object at a coarser scale.
 This is often of particular interest in medical imaging;
for example, the locations of inner organs might be needed for surgery planning \cite{ramlau2007mumford}.

Classical reconstruction methods  perform poorly
in limited-data situations.
Better approaches 
incorporate specific prior assumptions on the reconstruction.
They are typically stated in terms of the minimization 
of some cost function. A popular representative is the convex total variation \cite{chambolle2011first, rudin1992nonlinear}. 
 A recent trend is to use non-convex regularizers \cite{bostan2013sparse, chartrand2007exact,nikolova2008efficient,ramlau2007mumford}. 
Although analytically and computationally more demanding, they
give more freedom in the modeling and often yield better reconstructions
\cite{chartrand2007exact,chartrand2009fast}.
In their seminal work \cite{mumford1989optimal}, 
Mumford and Shah introduced a cost functional on the piecewise-constant functions -- now called the piecewise-constant Mumford-Shah functional -- where the length of the discontinuity set is penalized.
This functional has shown good performance,
especially for the recovery of geometric macrostructures from poor data
\cite{klann2011mumford,ramlau2007mumford,ramlau2010regularization}.
The piecewise-constant Mumford-Shah model also appears in statistics and image processing 
where it is often called \emph{Potts model} \cite{boykov2001fast, boysen2009jump, boysen2009consistencies,pock2009convex,winkler2003image}.
The variational formulation of the Potts model is given by 
\begin{equation}\label{eq:pottsRadon}
	\textstyle\argmin_u \ \gamma \, \| \nabla u\|_{0} + \norm{A u - f}_{2}^2.
\end{equation}
Here, $A$ is a linear operator (e.g.,~the Radon transform) and $f$ is an element of the data space (e.g.,~a sinogram).
A mathematically precise definition of the jump term $\| \nabla u \|_{0}$ is rather technical in a spatially continuous setting.
However, if $u$ is piecewise-constant and  the discontinuity set of $u$ is sufficiently regular, say, a union of $C^1$ curves, then $\| \nabla u\|_{0}$ is just the total arc length of this union.
In general, the gradient $\nabla u$ is given in the distributional sense and the boundary length is expressed in terms of the \mbox{$(d-1)$-dimensional} Hausdorff measure.
When $u$ is not piecewise-constant, the jump penalty is infinite \cite{ramlau2010regularization}.
The second term measures the  fidelity of a solution $u$ to the data $f.$ 
The parameter $\gamma > 0$ controls the balance between data fidelity and jump penalty.
The Potts model can be interpreted in two ways.
On the one hand, if the imaged object is (approximately) piecewise-constant, 
then the solution is an (approximate) reconstruction of the imaged object.
On the other hand, since  a piecewise-constant solution 
directly induces a partitioning of the image domain,
it can be seen as joint reconstruction and segmentation.
Executing reconstruction and segmentation jointly typically leads to better results
than performing the two steps successively \cite{klann2011mumford,ramlau2007mumford};
see Figure \ref{fig:intro}.

    \begin{figure}[t]
\def\figwidth{0.23\columnwidth}
\def\figspace{\hspace{0.1\textwidth}}
\centering
	\def\figfolder{newExperiments/segmentationOfFBP/}
	\begin{subfigure}[t]{\figwidth}
		\centering
	\includegraphics[width=1\columnwidth]{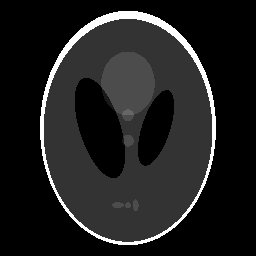}
	\caption{Original ($256 \times 256$)}
	\end{subfigure}
	\hfill
	\begin{subfigure}[t]{\figwidth}
	\includegraphics[width=\columnwidth]{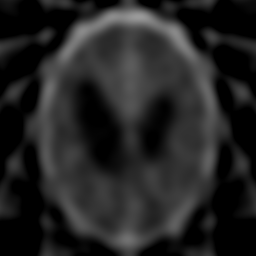}
	\caption{FBP reconstruction from $7$ angles (Hamming window, tuned w.r.t. PSNR).}
	\end{subfigure}
	\hfill
		\begin{subfigure}[t]{\figwidth}
	\includegraphics[width=\columnwidth]{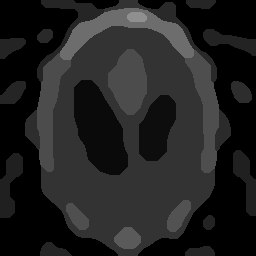}
	\caption{Graph-cut based segmentation of FBP result \\
(Rand index: $\protect\input{\figfolder randIndexGC.txt}).$ }
	\end{subfigure}
	\hfill
		\def\figfolder{newExperiments/radonAnglesNew/nAngles7A2/}
		\begin{subfigure}[t]{\figwidth}
	\includegraphics[width=\columnwidth]{\figfolder recPotts}
	\caption{Joint reconstruction and segmentation using our method (Rand index: $\protect\input{\figfolder randIndexPotts.txt}).$}
	\end{subfigure}
	\vspace{-0.2cm}
	\caption{
Segmentation from highly undersampled Radon data 
($\protect\input{\figfolder nAngles.txt}$ projection angles). 
Reconstruction by a classical method and 
subsequent segmentation leads to unsatisfactory results.
Our method produces a high-quality segmentation.
}
	\label{fig:intro}
\end{figure}

The Potts problem is algorithmically challenging. 
For $A = \mathrm{id},$ it is NP-hard  in dimension greater than one \cite{veksler1999efficient}, 
and, for general linear operators $A,$ it is even NP-hard for one-dimensional signals \cite{storath2013jump}. Thus, there is no hope to find a global minimizer in reasonable time.
Nevertheless, due to its importance in image reconstruction and segmentation,
several approximative strategies have been proposed.
Bar et al.~\cite{bar2004variational} consider an Ambrosio-Tortorelli-type approximation. Kim et al.~\cite{kim2002curve} use a level-set based active contour method 
 for deconvolution.
Ramlau and Ring \cite{ramlau2007mumford} employ a related level-set approach 
for the joint reconstruction and segmentation of x-ray tomographic images;
further applications are electron tomography \cite{klann2011mumfordElectron} and SPECT \cite{klann2011mumford}. 
The latter authors were the first to investigate the regularizing properties of such functionals \cite{ramlau2007mumford, ramlau2010regularization}.
We elaborate further on Potts regularization in inverse problems and on 
existing algorithmic approaches in the sections 1.1 and 1.2, respectively.

In this paper, we first discretize the Potts problem as
\begin{equation}\label{eq:pottsWeight}
  u^* = \argmin_{u \in \R^{m\times n}} \gamma  \sum_{s = 1}^S \omega_s \|  \nabla_{p_s} u\|_0 + \| Au - f\|_2^2.
\end{equation}
The symbol $\nabla_{p}$ denotes finite differences with respect to the displacement vector $p$
so that $\nabla_{p} u = u( \cdot + p) - u,$ where $p \in \Z^2\setminus \{0\}.$
The symbol $\| \nabla_{p_s} u \|_0 $ denotes the number of nonzero entries of  $\nabla_{p_s} u.$
The displacement vectors $p$ belong to a neighborhood system $\Nc = \{ p_1, ...,p_S\}$ and 
$\omega_1,..., \omega_S$ are nonnegative weights.
The simplest neighborhood system is made up of the two canonical basis vectors of $\R^2$ along with unit weights.
Unfortunately, when refining the grid, this discretization converges to a limit that measures the boundary in terms
of the $\ell^1$ analogue of the Hausdorff measure \cite{chambolle1995image}.
The practical consequences  are unwanted block artifacts in the reconstruction (geometric staircasing).
The addition of diagonal or \enquote{knight-move} finite differences (referring to the moves of a knight in chess) mildens such anisotropy effects \cite{chambolle1999finite}.
We provide a general scheme for the proper choice of finite-difference systems and accompanying weights 
which allows for arbitrarily good approximations of the Euclidean length.

Based on \eqref{eq:pottsWeight}, we 
propose a new minimization strategy for the Potts problem.
Our key contribution is a particularly  suitable splitting of the Potts problem \eqref{eq:pottsWeight} 
into specific subproblems. 
The first subproblem is a classical Tikhonov-regularized problem
with a solution that reduces to a linear system of equations.
All the remaining subproblems can be solved efficiently by
dynamic programming \cite{chambolle1995image,friedrich2008complexity, mumford1989optimal, storath2014fast}. 
We prove that our algorithm converges.
A major advantage of our method is that 
neither the number of segments 
nor the gray-values of a solution have to be fixed a priori.
Further, the method does not require any initial guess of the solution.
Last but not least, it is highly parallelizable and easy to implement.

We demonstrate the suitability of our method for 
joint image reconstruction and segmentation in several  setups.
We consider  Radon data, 
which appear in x-ray tomography (CT) and in positron emission tomography (PET).
In noise-free conditions, we achieve an almost perfect reconstruction/segmentation 
of the Shepp-Logan phantom from as few as seven projections 
(see Figure \ref{fig:intro}).
Also in the presence of noise,
our method yields a high-quality segmentation
from a small number of projections.
On PET data of a physical phantom, we obtain a reliable segmentation of the anatomic structures.
As further applications, we briefly discuss photoacoustic tomography, which is based on spherical Radon data,
and the deconvolution of vector-valued data.

\subsection{Potts Regularization in Inverse Problems}
\label{sec:pottsRegInInvPr}

In many imaging problems, the imaging operator $A$ is not boundedly invertible.
Examples are the Radon transform and the spherical Radon transform when viewed as operators on the corresponding $L^2$ spaces.
Because of  unboundedness, a direct inversion (if possible at all) may amplify small perturbations in the data.
In order to attenuate such effects and produce more stable reconstructions, regularization is needed. 
A popular approach for regularization
is by means of minimizing an energy functional of the form
$\gamma J(u) + \|Au - f\|_2^2.$
Here, the second term is the data-fidelity term 
while the first term -- called the regularizing term -- is a penalty that incorporates a priori knowledge on the solution. Classical regularizations are Besov or Sobolev seminorms which associate certain smoothness classes with 
the solution. A less classical choice is the $TV$ seminorm which leads to solutions of bounded variation.
In the context of sparsity regularization,  non-convex functionals are also used as regularizing terms \cite{bredies2009regularization}. 
The Potts functional \eqref{eq:pottsRadon} fits into this framework by letting $J$ be the jump penalty. 
It is non-convex and may be seen as a term that enforces a sparse gradient. 
We note that Mumford-Shah approaches (which include Potts functionals) 
also regularize the boundaries of the discontinuity set of the underlying signal \cite{jiang2014regularizing}.

The existence of minimizers of \eqref{eq:pottsRadon} is not guaranteed in a continuous domain setting
\cite{fornasier2010iterative,fornasier2013existence, ramlau2010regularization, storath2013jump}.
For example, if $A$ is a compact convolution operator originating from a smooth function and if data is given by the impulse response $f = A \delta,$ then the associated Potts functional does not have a minimizer \cite{storath2013jump}.
In order to ensure the existence of minimizers,
additional penalty terms such as an $L^p$ ($1<p<\infty$) term of the form 
$\|u\|_p^p$ \cite{ramlau2007mumford, ramlau2010regularization}
or pointwise boundedness constraints \cite{jiang2014regularizing} have been considered.
We note that the existence of minimizers is guaranteed in the discrete domain setup \cite{fornasier2010iterative, storath2013jump}.

It is important to verify that
the Potts model is a regularization method in the sense of inverse problems.
The first work dealing with this task is \cite{ramlau2010regularization}. 
Ramlau and Ring \cite{ramlau2010regularization} assume that the solution space consists of non-degenerate piecewise-constant functions with at most $k$ (arbitrary, but fixed) different values 
which are additionally bounded. Under relatively mild assumptions on the operator $A$, they show stability. In addition, they   
give a parameter choice rule and show that the solutions corresponding to the chosen parameters (which depend on the level of noise) converge to a noise-free solution as the noise vanishes.  
This means that the method is a regularizer in the sense of inverse problems.
Related references are \cite{klann2013regularization, klann2011mumford} and the recent publication \cite{jiang2014regularizing}
which includes (non-piecewise-constant) Mumford-Shah functionals.

\subsection{Existing Algorithmic Approaches to Potts and Related Problems}
\label{sec:ExistingAlgs}

The name \enquote{Potts model} for \eqref{eq:pottsRadon} has been retained in honor of R.B.~Potts \cite{potts1952some} who, as early as 1952, considered related jump penalties in his work in the field of statistical mechanics.
The classical Potts model ($A = \text{id}$) 
was first introduced in image processing by Geman and Geman \cite{geman1984stochastic} in a statistical framework. Their algorithmic approach is based on simulated annealing. 
From a variational-calculus point of view, the problem was first considered by Mumford and Shah \cite{mumford1989optimal}. Ambrosio and Tortorelli  \cite{ambrosio1990approximation} proposed an approximation by elliptic functionals.
Currently, popular algorithmic approaches for the classical case $A = \text{id}$ are based on active contours \cite{chan2001active, cremers2007review}, graph cuts \cite{boykov2001fast},
convex relaxations \cite{pock2009convex, chambolle2012convex},
and semi-global matching \cite{hirschmuller2008stereo}. 

The general case of $A$ being a linear operator has been investigated to a lesser extent.
Yet,  strategies based on active contours emerge as an important class.
The idea is to parameterize the jump set 
of $u$ by a set of contours  which evolve according to a deformation force. Active contours are used for $A$ being a convolution operator in \cite{bar2004variational}  and for $A$ being the Radon transform in \cite{ramlau2007mumford}. Both use level sets to parameterize the active contours.
A similar method has been applied to joint segmentation of SPECT/CT data \cite{klann2013regularization, klann2011mumford}.
Level-set  methods have also been  applied to stabilize sparse-angle tomography \cite{kolehmainen2008limited}.
In \cite{kreutzmann2014geometric}, the authors use explicitly parameterized contours for the application to bioluminescence tomography.
In general, active contours are quite flexible as the deformation force can be easily adjusted.
Their main disadvantages are that they require a good guess on the initial contour and a good guess on the expected number of gray values of the solution.

Graph cuts \cite{boykov2001fast} are a popular  strategy to address the classical Potts problem with $A=\text{id}$.
In \cite{storath2013jump}, the inverse problem for  a general $A$
is approached by iteratively
using a graph-cut strategy on a classical Potts problem, followed by Tikhonov regularization. 
There, the authors apply their algorithm to deconvolution.
A drawback of this approach is that graph cuts need an (a priori) discretization of the codomain of $u.$ 
Thus, one needs either a good initial guess on the values   that $u$ may take, or a very fine (and expensive) discretization of the codomain of $u.$

To circumvent NP hardness \cite{boykov2001fast}, 
the jump penalty is often replaced by the total variation $\| \nabla u \|_1;$ see \cite{ beck2009fast, bredies2013regularization, bronstein2002reconstruction,candes2006robust, defrise2011algorithm,yan2011expectation} and the references therein.
TV minimization has been used for the reconstruction from sparse Radon data in \cite{sidky2006accurate,sidky2008image, sidky2011constrained}.
TV minimization is  theoretically and algorithmically easier to access since it is a convex problem. 
The theory of compressed sensing gives conditions when the minimizers of the relaxed problem coincide with those of the original problem  \cite{candes2006robust, needell2013stable}.
However, the conditions are relatively restrictive and fail to apply to some problems of practical importance.
In limited-data situations, TV minimizers typically lack sharply localized boundaries \cite{chartrand2007exact, chartrand2009fast}. 
To sharpen the results of total variation minimization, various techniques such as iterative reweighting \cite{candes2008enhancing}, 
simplex constraints \cite{lellmann2009convex}, or iterative thresholding \cite{cai2013multiclass}  have been proposed. 

In order to come closer to the solution of the initial Potts problem,
many authors propose the use of non-convex priors.
Chartrand \cite{chartrand2007exact} uses priors based on the $\ell^p$ norm of the gradient, $0 < p < 1,$
for the reconstruction of MRI-type images. The x-ray CT setup was considered in \cite{chartrand2013nonconvexXRAY, SidkyChJoPan:2013ti}. 
Logarithmic priors are considered by Bostan et al.~\cite{bostan2013sparse}
for the reconstruction of biomedical images. 
Nikolova et al.~\cite{nikolova2010fast, nikolova2008efficient} 
propose a whole class of  non-convex regularizers which
are treated using a graduated non-convexity approach.

Another  approach is to 
transform the Potts problem to an $\ell^0$ problem 
 \cite{artina2013linearly,chartrand2009fast, fornasier2010iterative}. 
The resulting problem is separable
which allows for the application of iterative hard-thresholding-type algorithms 
\cite{artina2013linearly, blumensath2008iterative, blumensath2009iterative, fornasier2010iterative}. 
However, using this transformation comes with constraints in form of discrete Schwarz conditions \cite{fornasier2010iterative} as well as a data term of the form $\|ABu-f\|_2^2$ 
with  a full triangular matrix $B.$ 
While the initial system matrix $A$ is typically sparse, the modified matrix $AB$ is not so in general. 

\subsection{Organization of the Article}

In Section 2, we present our splitting approach to the Potts problem.
We start by explaining the basic approach using an anisotropic discretization of \eqref{eq:pottsRadon}.
Next, we discuss strategies to get more isotropic discretizations, thus attenuating
the unwanted geometric staircasing effect resulting from an anisotropic discretization. 
Then, we present our general algorithm.
We briefly discuss more general data terms
and prove the convergence of our algorithm.
In Section 3, we apply our method to ill-posed imaging problems.
In particular, we consider Radon data as well as spherical Radon data. Furthermore,
we apply our technique to real PET data. Eventually, we apply our method to deconvolution problems.

\section{A Splitting Approach for the Potts Problem}

In this section, we present our splitting approach for the discrete-domain 
Potts functional \eqref{eq:pottsWeight}.
It seems instructive to first describe the basic idea of the splitting 
in the simplest case, which is the anisotropic discretization of the length term.
This discretization turns out to be anisotropic 
as it measures the length of the discrete boundary in the Manhattan metric, which is the metric induced by the $\ell^1$ norm on $\R^2$.
This typically leads to block artifacts in the reconstruction.
To avoid this we derive appropriate  neighborhood systems and corresponding weights
such that the discrete length term in \eqref{eq:pottsWeight} becomes more isotropic.
Based on this discretization, we  formulate our general  splitting for the Potts problem. 
The problem reduces to smaller tractable subproblems that we briefly describe. 
We conclude the section with a convergence result.

\subsection{Basic Splitting Algorithm for an Anisotropic Discretization}\label{sec:basicSplitting}

In the simplest case, the discretization of the regularizing term $\| \nabla u\|_0$
uses only  finite differences with respect to the coordinate axes.
Thus, $p_1 = (1, 0),$ $p_2 = (0,1),$ and the weights $\omega_1, \omega_2$ are equal to $1.$ 
Then, the regularizing term reads 
\[
	\| \nabla u \|_0 = \| \nabla_{p_1} u \|_0 + \| \nabla_{p_2} u \|_0 =
	| \{ (i,j) : u_{ij} \neq u_{i+1,j} \} | + | \{ (i,j) : u_{ij} \neq u_{i,j+1} \} |.
\]
Plugging this discretization into \eqref{eq:pottsRadon},
we rewrite the  Potts problem as the constrained optimization problem
\begin{equation}
\begin{array}{lll}
		\displaystyle\minimize_{u_1, u_2, v}
  & \gamma (\| \nabla_{p_1} u_1\|_0 + \| \nabla_{p_2} u_2 \|_0)
  +   \|Av -f\|_2^2 \\[2ex]
  \text{subject to }&v-u_1 = 0,\quad v-u_2 = 0, \quad u_1-u_2 = 0.
\end{array}  
\end{equation}
The augmented Lagrangian of this optimization problem reads
\begin{multline}\label{eq:augLagAnIso}
L(u_1, u_2, v, \lambda_1, \lambda_2, \rho) =  \ \gamma  \, (\| \nabla_{p_1} u_1\|_0 + \| \nabla_{p_2} u_2 \|_0) + \| A v - f\|_2^2  \\
+ \langle \lambda_1, v - u_1 \rangle  + \tfrac{\mu}{2} \| v - u_1 \|_2^2 
 + \langle \lambda_2, v - u_2 \rangle + \tfrac{\mu}{2} \| v - u_2 \|_2^2
 \\
+ \langle \rho, u_1 - u_2 \rangle + \tfrac{\nu}{2} \| u_1 - u_2 \|_2^2.
\end{multline}
The constraints are now part of the (multivariate) target functional $L.$
The parameter $\nu > 0$ controls how strong 
the split variables $u_1, u_2$  are tied to each other 
and $\mu > 0$  controls their coupling to $v.$ 
The variables $\lambda_1,$ $\lambda_2,$ and $\rho$ are  $({m \times n})$-dimensional arrays of Lagrange multipliers. The inner product is defined as $\langle x, y \rangle = \sum_{i,j} x_{ij} y_{ij}.$ 
Completing the squares in \eqref{eq:augLagAnIso}, we reformulate $L$ in the convenient form
\begin{multline}\label{eq:augLagAnIso2}
L(u_1, u_2, v, \lambda_1, \lambda_2, \rho) =  \ \gamma  \, (\| \nabla_{p_1} u_1 \|_0 + \| \nabla_{p_2} u_2 \|_0) + \| A v - f\|_2^2 \\
+  \tfrac{\mu}{2} \| v - u_1 + \tfrac{\lambda_1}{\mu} \|_2^2  - \tfrac{\mu}{2}\|\tfrac{\lambda_1}{\mu}\|_2^2   
+ \tfrac{\mu}{2} \| v - u_2 + \tfrac{\lambda_2}{\mu} \|_2^2  - \tfrac{\mu}{2}\|\tfrac{\lambda_2}{\mu}\|_2^2 \\
+ \tfrac{\nu}{2} \| u_1 - u_2 + \tfrac{\rho}{\nu} \|_2^2  - \tfrac{\nu}{2}\|\tfrac{\rho}{\nu}\|_2^2.
\end{multline}
We now use the alternating direction method of multipliers (ADMM).
The basic idea of ADMM is to minimize the augmented Lagrangian $L$ 
  with respect to $u_1,$ $u_2,$ and $v$ separately and to perform gradient ascent steps with respect to the Lagrange multipliers. (We refer to \cite{boyd2011distributed} for a detailed treatment on optimization strategies based on ADMM.)
To simplify the expressions for  $\argmin_{u_s} L,$ $s=1,2,$ and $\argmin_{v} L,$ we will use the following lemma.
\begin{lemma}\label{lem:sumTerms}
For $a, b_1,...,b_N \in \R$ and $x_1,...,x_N > 0,$ we have that
\[ 
	\textstyle \sum_i x_i (a - b_i)^2 = (\sum_i x_i)  (a -   \frac{\sum_i b_i x_i}{\sum_i x_i})^2 + C 
\]
where   $C \in \R$ is a constant that does not depend on $a.$
 \end{lemma}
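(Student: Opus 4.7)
The plan is a direct expansion and completion of the square in the variable $a$. Concretely, I would first expand the left-hand side as a quadratic polynomial in $a$, then group the $a^2$ and $a$ terms to recognize the relevant weighted mean, and finally read off the constant.

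Set $X = \sum_i x_i$ (which is strictly positive by hypothesis) and $B = \sum_i b_i x_i$. Expanding,
\[
\sum_i x_i (a - b_i)^2 \;=\; X a^2 - 2 B a + \sum_i b_i^2 x_i.
\]
Factoring $X$ out of the quadratic-and-linear part in $a$ and completing the square yields
\[
X a^2 - 2 B a \;=\; X\!\left(a - \tfrac{B}{X}\right)^{\!2} - \tfrac{B^2}{X}.
\]
Substituting back gives
\[
\sum_i x_i (a - b_i)^2 \;=\; X\!\left(a - \tfrac{\sum_i b_i x_i}{\sum_i x_i}\right)^{\!2} + \underbrace{\sum_i b_i^2 x_i - \tfrac{B^2}{X}}_{=: \, C},
\]
and the remainder $C$ manifestly depends only on the $x_i$ and the $b_i$, not on $a$.

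There is no real obstacle here; the only point to observe is that $X > 0$ so that dividing by $\sum_i x_i$ and completing the square is legitimate. (As a sanity check, $C \geq 0$ by the Cauchy--Schwarz inequality applied to the vectors $(\sqrt{x_i} \, b_i)_i$ and $(\sqrt{x_i})_i$, with equality iff all $b_i$ are equal, which matches the intuition that $\sum_i x_i (a - b_i)^2$ is minimized by the weighted mean $\tfrac{\sum_i b_i x_i}{\sum_i x_i}$.)
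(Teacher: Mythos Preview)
Your proof is correct and follows essentially the same approach as the paper: expand the left-hand side as a quadratic in $a$, then complete the square. Your version is arguably tidier (via the abbreviations $X$ and $B$) and additionally computes $C$ explicitly and notes its nonnegativity, but the underlying argument is identical.
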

 \begin{proof} We calculate
 	\begin{align*}\textstyle
		\sum_i x_i (a - b_i)^2 &= \textstyle a^2 (\sum_i x_i) - 2a (\sum_i b_i x_i) + \sum_i  b_i^2x_i \\
		 &= \textstyle (\sum_i x_i) \paren{ a^2 - 2a \frac{\sum_i b_i x_i}{\sum_i x_i} + \frac{\sum_i  b_i^2x_i}{\sum_i x_i}} \\
		 &= \textstyle (\sum_i x_i) \paren{ a^2 - 2a \frac{\sum_i b_i x_i}{\sum_i x_i} + \paren{\frac{\sum_i b_i x_i}{\sum_i x_i}}^2 - \paren{\frac{\sum_i b_i x_i}{\sum_i x_i}}^2 - \frac{\sum_i  b_i^2x_i}{\sum_i x_i}} \\
		 &= \textstyle (\sum_i x_i) \paren{ \paren { a - \frac{\sum_i b_i x_i}{\sum_i x_i}}^2  - \paren{\frac{\sum_i b_i x_i}{\sum_i x_i}}^2 - \frac{\sum_i  b_i^2 x_i}{\sum_i x_i}}.
	\end{align*}
	The last two terms do not depend on $a,$ which shows the assertion.
 \end{proof}
 Using Lemma \ref{lem:sumTerms}, we rearrange the quadratic summands of
 $\argmin_{u_s} L$ for $s=1,2$ and those of $\argmin_{v} L.$ 
Doing so, we get the iteration
\begin{equation}\label{eq:anisoADMM}
\left\{
\begin{array}{rcl}
	u_1^{k+1} &\in& \argmin_{u_1} \frac{2\gamma}{\mu_k + \nu_k} \| \nabla_{p_1} u_1\|_0 + 
	\| u_1 - \frac{1}{\mu_k + \nu_k}  (\mu_k v^k + \nu_k u_2^k + \lambda_1^k - \rho^k ) \|_2^2, \\
		u_2^{k+1} &\in& \argmin_{u_2} \frac{2\gamma}{\mu_k + \nu_k} \| \nabla_{p_2} u_2\|_0 + 
	\| u_2 - \frac{1}{\mu_k + \nu_k}  ( \mu_k v^k + \nu_k u_1^{k+1} + \lambda_2^k + \rho^k )  \|_2^2, \\
		v^{k+1} &=& \argmin_v  \| A v - f\|_2^2 + \frac{\mu_k + \nu_k}{2} \| v - \frac{1}{2 \mu_k} ( \mu_k u_1^{k+1} +  \mu_k u_2^{k+1} - \lambda_{1}^k - \lambda_{2}^k)   \|_2^2 , \\
				\lambda_1^{k+1} &=& \lambda_1^k + \mu_k (v^{k+1}-u_1^{k+1}), \\
	\lambda_2^{k+1} &=& \lambda_2^k + \mu_k (v^{k+1}-u_2^{k+1}), \\
	\rho^{k+1} &=& \rho^k + \nu_k (u_1^{k+1}-u_2^{k+1}). \\
	\end{array}
	\right.
\end{equation}
As coupling parameter, we use an increasing sequence $(\mu_k)_{k\in \N}.$
This is a slight refinement of the standard ADMM \cite{wright2006numerical}.

The crucial observation is that we can solve all of the  subproblems of \eqref{eq:anisoADMM} 
efficiently.
The first line decomposes into $n$ univariate Potts problems of the form
\begin{equation}\label{eq:rowWise}
\begin{split}
(u_1^{k+1})_{:,j} 
&\in \argmin_{g \in \R^m} 
 \frac{2\gamma}{\mu_k + \nu_k}  \| \nabla g\|_0 + 
\| g -  \tfrac{1}{\mu_k + \nu_k}(\mu_k v^{k}_{:,j} + \nu_k (u_2^{k})_{:,j}  +  (\lambda_{1}^k)_{:,j} - \rho^k_{:,j}) \|_2^2,
\end{split}
\end{equation}
where we use the subscript notation $x_{:,j}$ to denote the $j$-th 
row of the $(m \times n)$-image $x,$ that is,
$x_{:,j} = (x_{ij})_{i=1,...,m}.$
Analogously, we get a  decomposition for the second line of \eqref{eq:anisoADMM} into the problems
\begin{equation}\label{eq:colWise}
\begin{split}
(u_2^{k+1})_{i,:} 
&\in \argmin_{g \in \R^n} 
 \frac{2\gamma}{\mu_k + \nu_k}  \| \nabla g\|_0 + 
\| g -  \tfrac{1}{\mu_k + \nu_k}(\mu_k v^{k}_{i,:} + \nu_k (u_1^{k+1})_{i,:}  +  (\lambda_{2}^k)_{i,:} + \rho^k_{i,:})  \|_2^2.
\end{split}
\end{equation}
The third  line of 
\eqref{eq:anisoADMM} 
is a classical $L^2$ Tikhonov regularization.
The last three lines are simple gradient-ascent steps in the Lagrange multipliers.
We briefly describe in Section \ref{sec:subproblems} the  strategies to solve these subproblems.

\subsection{Design of Isotropic Discretizations}

The anisotropic discretization of Section \ref{sec:basicSplitting}
 measures the length of the jump set 
in the anisotropic Manhattan metric \cite{chambolle1995image}.
This leads to geometric staircasing in the reconstructions
illustrated in Figure \ref{fig:isoVsAniso}.
The Euclidean length can be approximated better when complementing the neighborhood system with
finite-difference vectors,
for example, diagonal directions or \enquote{knight-move} directions \cite{chambolle1999finite}.
  We now present a general scheme to construct appropriate neighborhood systems.

 \begin{figure}
\def\thisfigwidth{0.23\textwidth}
\def\figfolder{experiments/isoVsAnisoGeo/}
\centering
\begin{subfigure}[t]{\thisfigwidth}
\includegraphics[width=\textwidth]{\figfolder phantom} 
\caption{Original.}
\end{subfigure}
\hfill
\begin{subfigure}[t]{\thisfigwidth}
\includegraphics[width=\textwidth]{\figfolder recPottsAniso} 
\caption{Reconstruction using $\Nc_0$ (PSNR: 
$\protect\input{\figfolder psnrPottsAniso.txt},$ MSSIM: $\protect\input{\figfolder ssimPottsAniso.txt}$).}
\end{subfigure}
\hfill
\begin{subfigure}[t]{\thisfigwidth}
\includegraphics[width=\textwidth]{\figfolder recPottsIso1}
\caption{Reconstruction using $\Nc_1$ (PSNR: 
$\protect\input{\figfolder psnrPottsIso1.txt},$ MSSIM: $\protect\input{\figfolder ssimPottsIso1.txt}$).}
\end{subfigure}
\hfill
\begin{subfigure}[t]{\thisfigwidth}
\includegraphics[width=\textwidth]{\figfolder recPottsIso2}
\caption{Reconstruction using $\Nc_2$ (PSNR: 
$\protect\input{\figfolder psnrPottsIso2.txt},$ MSSIM: $\protect\input{\figfolder ssimPottsIso2.txt}$).}
\end{subfigure}
		\caption{Reconstruction of geometric shapes
		from Radon data with $\protect\input{\figfolder /nAngles.txt}$ angles and noise level $\protect\input{\figfolder /noiseLevel.txt}.$
		 In case of the anisotropic discretization $\Nc_0,$ jumps with respect to compass directions are
		 significantly less penalized than jumps with respect to the diagonal directions (see Figure \ref{fig:anisotropy}).
	 In consequence, horizontal and vertical edges are favored which results in a geometric staircasing effect.
		  The neighborhood $\Nc_1$ improves the result significantly.
The knight-move system $\Nc_2$ gives the most accurate reconstruction of the geometric shapes.
		}
		\label{fig:isoVsAniso}
\end{figure}

 The starting point is the anisotropic neighborhood system
 \[
 \Nc_0 = \{ (1,0), (0,1)\}.
 \]
 The vectors in this system have the (formal) slopes $0$ and $\infty.$
 We add a new finite-difference vector $(x,y) \in \Z^2$ to the system only if its slope $y/x$ is not yet contained in the system. 
 For example, we can add the vector $(1,1)$ with slope  $1.$
For reasons of symmetry, we also add the orthogonal vector $(1,-1).$
Thus, we get the neighborhood system
\begin{equation}\label{eq:N1}
	 \Nc_1 = \{ (1,0), (0,1), (1,1), (1,-1)\}.
\end{equation}
The next vectors to include in the neighborhood system are the
four knight move vectors
$(\pm 2,1),$ $(1,\pm 2)$ which leads to the system
\begin{equation}\label{eq:N2}
	 \Nc_2 = \{ (1,0), (0,1), (1,1), (1,-1), (-2,-1), (-2,1), (2,1), (2,-1)\}.
\end{equation}
The general scheme of adding new vectors corresponds to the standard enumeration of the rational numbers.

Appropriate weights can be derived as follows.
Let us assume that $u$ is a binary $(n\times n)$ image with an edge along the direction $(x,y) \in \Nc.$ 
We first look at lines with a slope $y/x$ between $(-1)$ and $1$ going  from the left to the right boundary of the image.
(If the slope of $(x,y)$ is not in the interval  $[-1,1]$ then we look at the $\pi/2$-rotated image and exchange the roles of $x$ and $y.$)
The Euclidean length of such a line is given by $n\sqrt{x^2 + y^2}/x.$
Since we want that the total jump length of this image equals that Euclidean length,
we get a condition on the weights that takes the form
\begin{equation}\label{eq:weightConditions}
\sum_{s=1}^S  \omega_{s}\, \| \nabla_{p_s} u \|_0 = n\frac{\sqrt{x^2 + y^2}}{x},
\end{equation}
where $\| \nabla_{p_s} u \|_0 $ is given by
\[
	\| \nabla_{p_s} u \|_0 = | \{  (i,j) :  u_{(i,j) +p_s} \neq u_{(i,j) }   \} |.
\]
It remains to evaluate the left-hand side of \eqref{eq:weightConditions}
for the binary image $u.$
This can be done either manually for small neighborhood systems
or with the help of a computer program for larger neighborhood systems.
When counting the non-zero entries of $\nabla_{p_s} u$ we assume $n$ to be large so that boundary effects are negligible.
We end up with a system of $S$ equations for the $S$ unknowns.
For the diagonal neighborhood system $\Nc_1,$ \eqref{eq:weightConditions}
yields the conditions
\begin{equation*}
\begin{alignedat}{8}
      \omega_1 &  &  & {}+{}     &   \omega_3 &{}+{}     &   \omega_4   & =  1,\\
       & {} {} & \omega_2 & {}+{}     &   \omega_3 &{}+{}     &   \omega_4   & =  1,\\
     \omega_1 &  {}+ {} &  \omega_2 & {}+{} & 2 \omega_3 &   & & = \sqrt{2}, \\
          \omega_1 &  {}+ {} &  \omega_2 &  &  & + & 2 \omega_4  & = \sqrt{2}.
\end{alignedat}
\end{equation*}
Solving this linear system, we get the weights
\[
	\omega_{1} = \omega_2 = \sqrt{2} - 1 \quad \text{and} \quad \omega_{3} = \omega_4 = 1 - \frac{\sqrt{2}}{2}.
\]
For the knight-move neighborhood system $\Nc_2,$
we get 
an analogous system of equations in $S = 8$ unknowns which gives us the
weights
\[ 
\omega_{s} = 
\begin{cases}
\sqrt{5} - 2, &\text{for } s=1,2,\\
\sqrt{5} - \frac{3}{2}\sqrt{2}, &\text{for } s=3,4, \\
\frac{1}{2}(1 + \sqrt{2} - \sqrt{5}), &\text{for } s=5,..., 8. \\
\end{cases}
\]

\begin{figure}
\def\thisfigscaling{0.43\textwidth}
\def\thisfigheight{0.9\textwidth}
\def\thisfigwidth{0.9\textwidth}
\centering
\begin{subfigure}[t]{\thisfigscaling}\centering
	\input{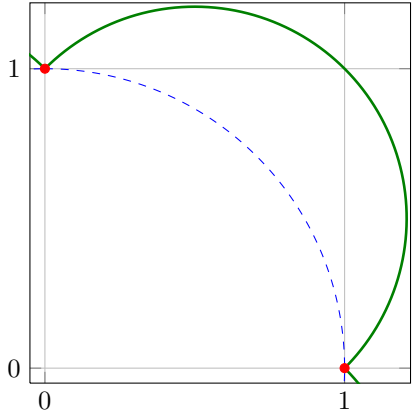}
	\caption{Anisotropic system $\Nc_0,$ ($E_0  \approx 1.41$).}
	\end{subfigure}
	\hspace{0.1\textwidth}
	\begin{subfigure}[t]{\thisfigscaling}\centering
		\input{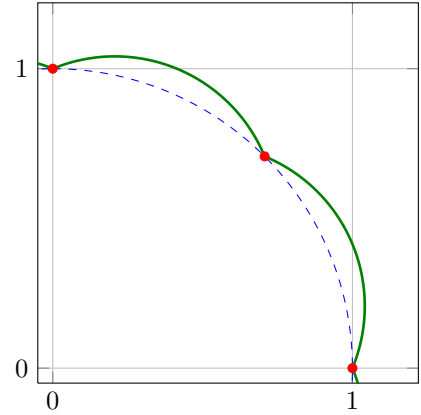}
			\caption{System with diagonals $\Nc_1$ ($E_1 \approx 1.08$).}
			\end{subfigure} \\[2ex]
	\begin{subfigure}[t]{\thisfigscaling}\centering
	\input{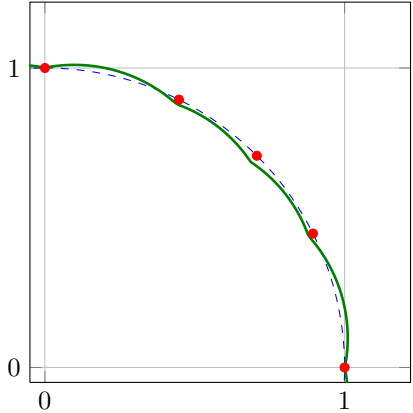}
	\caption{System with knight moves $\Nc_2$ using the weights of \cite{chambolle1999finite} ($E_2' \approx 1.05$).}
		\end{subfigure}
			\hspace{0.1\textwidth}
	\begin{subfigure}[t]{\thisfigscaling}\centering
	\input{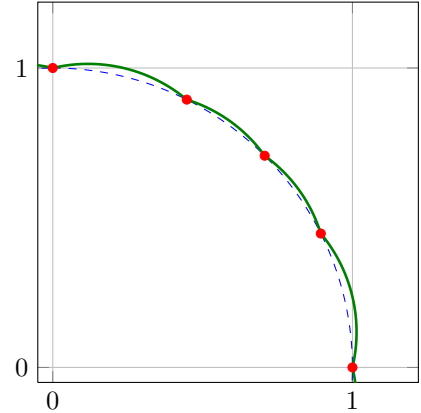}
	\caption{System with knight moves $\Nc_2$ using  our weights ($E_2 \approx 1.03$).}
		\end{subfigure}
		\caption{The solid line represents the length of 
		a Euclidean unit vector as measured in the finite-difference systems (as a function of the angle). The red dots identify the normalized vectors of the neighborhood system $p_s/\|p_s\|_2$ and the dashed line is the Euclidean unit circle. The isotropy increases  significantly when passing from $\Nc_0$ to $\Nc_1.$
		The increase in isotropy is less substantial when passing from $\Nc_1$ to $\Nc_2.$
		}
		\label{fig:anisotropy}
\end{figure}

We now turn to the question of how well we approximate the Euclidean length with the above discretizations.
The neighborhood systems give rise to a norm $\| \cdot \|_{\Nc}$ defined for $p \in \R^2$ by
\[
	\| p \|_{\Nc} = \sum_{s=1}^S \omega_s | \langle p, p_s \rangle |.
\]
By construction, the length 
$\|p_s \|_\Nc$ coincides with the Euclidean length $\| p_s\|_2$ for all vectors $p_s$ in the neighborhood system
as illustrated in Figure \ref{fig:anisotropy}.
In \cite{chambolle1999finite},
it is proposed to measure the isotropy of a finite-difference system
by the ratio $E$ between the longest and the shortest unit vector with respect to that length, compiled as
\[
	E = \max_{\|p\|_2 = 1} \| p \|_{\Nc}/\min_{\|p\|_2 = 1} \| p \|_{\Nc}.
\]
The closer the quantity $E$ is to one the higher is the isotropy.
For the anisotropic system  $\Nc_0,$ we get the value $E_0 = \sqrt{2} \approx 1.41.$
The introduction of diagonal directions reduces this value significantly to $E_1 \approx 1.08.$ 
If we include also the knight-move differences ($\Nc_2$), then the ratio improves further  to $E_2 \approx 1.03.$ 
We note that the weights 
for the system $\Nc_1$ coincide with those proposed in \cite{chambolle1999finite}, up to a normalization factor.
For $\Nc_2,$ our weights are more isotropic 
than the weights of \cite{chambolle1999finite}.

\subsection{Splitting Scheme for General Discretizations}
\label{sec:generalSplitting}

We now derive a minimization strategy for 
the general discretization \eqref{eq:pottsWeight}.
Let us denote the neighborhood system by
$\Nc = \{ p_1,...,p_S\}$ and let $\omega_1,..., \omega_S > 0$ where $S \geq 2.$
We first rewrite \eqref{eq:pottsWeight} as the constrained optimization problem
\begin{equation}
\begin{split}
	&\minimize_{u_1, ..., u_S, v} \quad \gamma \sum_{ s=1}^S \omega_s \| \nabla_{p_s} u_s \|_0 + \| A v - f\|_2^2 \\
		&\text{subject to} \quad 
	u_r - u_t = 0, \text{  for all }1 \leq r < t \leq S, \\
	&\text{\phantom{subject to}} \quad 
		v - u_s  = 0, \text{  for all }1 \leq s \leq S. 
	\end{split}
\end{equation}
The augmented Lagrangian of this optimization problem reads
\begin{multline}
L(u, \lambda, \rho) = \gamma \sum_{s=1}^S \omega_s \| \nabla_{p_s} u_s \|_0  + \frac{\mu}{2} \| v - u_s + \frac{\lambda_{s}}{\mu} \|_2^2 \\
	+ \frac{\nu}{2}\sum_{1 \leq r < t \leq S}\| u_r - u_t + \frac{\rho_{r,t}}{\nu} \|_2^2 
	+ \| A v - f\|_2^2,
\end{multline}
where $L$ depends on the variables 	$\{u_s\}_{1 \leq s \leq S},$ $\{\lambda_s\}_{1 \leq s \leq S}$ and $\{\rho_{r,t}\}_{1 \leq r < t \leq S}$.
The parameter $\nu > 0$ controls how strong 
the split variables $u_1, ..., u_S$  are tied to each other 
and $\mu > 0$  controls their coupling to $v.$ 
The variables $\lambda_{s}, \rho_{r,t} \in \R^{n\times m}$ are Lagrange multipliers.
In the ADMM iteration, we minimize $L$ sequentially 
with respect to $v, u_1, ..., u_S$ followed by a gradient ascent step in the Lagrange multipliers.
The minimization of $L$ with respect to $u_s$ reads
\begin{multline}
	\argmin_{u_s} L(u, \lambda, \rho) = \argmin_{u_s} \gamma \omega_s  \| \nabla_{p_s} u_s \|_0 + 
	\frac{\mu}{2} \| v - u_s  + \tfrac{\lambda_{s}}{\mu} \|_2^2 \\
	+ \frac{\nu}{2}\sum_{1 \leq r <s }\| u_r - u_s + \tfrac{\rho_{r,s}}{\nu} \|_2^2
	+ \frac{\nu}{2}\sum_{s < t \leq S }\| u_s - u_t + \tfrac{\rho_{s,t}}{\nu} \|_2^2. 
\end{multline}
We modify this expression using Lemma \ref{lem:sumTerms} to 
\begin{equation*}
	\argmin_{u_s} L(u, \lambda, \rho) = \argmin_{u_s} \frac{2\gamma \omega_s}{\mu+\nu (S-1)}  \| \nabla_{p_s} u_s \|_0 
	+ \| u_s -  w_s\|_2^2 
\end{equation*}
with 
\[
 w_s = 	 \frac{ \mu v + \lambda_s +  \sum_{1 \leq r < s} ( \nu u_r + \rho_{r,s}) + \sum_{s < t \leq S} (\nu u_t - \rho_{s,t})}{\mu + \nu (S-1)} .
\]
In a similar way,  we consider the minimizer with respect to $v$ as
\[
	\argmin_{v} L(u, \lambda, \rho) = \argmin_{v}  \| A v - f\|_2^2 + \sum_{s=1}^S \frac{\mu}{2} \| v - u_s + \frac{\lambda_{s}}{\mu} \|_2^2
	 \]
	 which we rewrite using Lemma \ref{lem:sumTerms} as 
\[
\argmin_{v} L(u, \lambda, \rho) = \argmin_{v}  \| A v - f\|_2^2
	+ \frac{\mu S}{2} \| v -  \frac{1}{S} \sum_{s = 1}^S ( u_s - \frac{\lambda_s}{\mu})  \|_2^2.
\]
Having computed explicit expressions for the minimization with respect to each variable, we obtain the ADMM iteration 
\begin{equation}\label{eq:generalADMM}
\left\{
	\begin{array}{rcl}
		u_1^{k+1} &\in& \argmin_{u_1} \frac{2\gamma \omega_1}{\mu_k + \nu_k (S-1)}  \| \nabla_{p_1} u_1 \|_0 
	+\| u_1 -  w_1^k \|_2^2, \\[2ex]
	&\vdots&
	\\[2ex]
	u_S^{k+1} &\in& \argmin_{u_S}  \frac{2\gamma \omega_S}{\mu_k +\nu_k (S-1)}  \| \nabla_{p_S} u_S \|_0 + \| u_S -  w_S^k \|_2^2, \\[2ex]
		v^{k+1} &=& \argmin_{v}  \| A v - f\|_2^2
	+  \frac{\mu_k S}{2}\| v -  z^k \|_2^2, 
  \\[2ex]
  		\lambda_{s}^{k+1} &=& \lambda_{s}^{k} + \mu_k (v^{k+1} - u_s^{k+1}), \quad\text{for all}\quad 1\leq s \leq S, \\[2ex]
		\rho_{r,t}^{k+1} &=& \rho_{r,t}^{k} + \nu_k (u_r^{k+1} - u_t^{k+1}), \quad\text{for all}\quad 1\leq r < t \leq S.
	\end{array}
	\right.
\end{equation}
Here, $w_s^k$ is given by 
\[
 w_s^k = \frac{ \mu_k v^{k} + \lambda_s^{k} +  \sum_{1 \leq r < s} ( \nu_k u_r^{k+1} +\rho_{r,s}^{k}) + \sum_{s < t \leq S} (\nu_k u_t^{k} - \rho_{s,t}^{k})}{\mu_k + \nu_k (S-1)}
\]
and $z^k$ by
\[
	z^k = \frac{1}{S} \sum_{s = 1}^S \paren{  u_s^{k+1} - \frac{\lambda_s^{k}}{ \mu_k}}. 
\]
The key observation is that every of the subproblem in the ADMM iteration
can be solved efficiently.
The minimization problems with respect to $u_1,...,u_S$
decompose into univariate Potts problems 
with respect to the paths induced by the finite difference vectors $p_s.$
To fix ideas, consider the finite-difference vector $p_s = (1,1).$ Then, the solution $u^{k+1}_s$ 
is given by solving one univariate Potts problem for each diagonal path of the two-dimensional array $w_s^k.$
The last subproblem, like in the anisotropic case, is a classical Tikhonov-type regularization.

We eventually remark that the anisotropic splitting \eqref{eq:anisoADMM} 
is a special case of the general form \eqref{eq:generalADMM} 
if we choose the anisotropic finite-difference system $\Nc_0.$

\subsection{Solution of the Subproblems}
\label{sec:subproblems}
Solution methods for the univariate
 Potts and classical Tikhonov problems are well studied.
Since they are the important building blocks of our iteration \eqref{eq:generalADMM},
we  briefly recall the idea of the algorithms.

The classical univariate Potts problem is given by
\begin{equation}\label{eq:potts1D}
	P_\gamma(g) = \gamma \, \| \nabla g \|_0 + \| g - f\|_2^2 \to \mathrm{min},
	\end{equation}
	where $g,f \in \R^{n}$ and $\| \nabla g \|_0 = | \{ i : g_{i} \neq g_{i+1}\}|$ denotes the number of jumps of $g.$
	This  can be solved exactly by dynamic programming \cite{chambolle1995image, friedrich2008complexity, mumford1985boundary,mumford1989optimal,    storath2014fast, demaret2012reconstruction, winkler2002smoothers}.
 The basic idea is that a minimizer of the Potts functional
for data $(f_1,...,f_{r})$ can be computed in polynomial time provided that minimizers of the  partial data $(f_1),$ $(f_1, f_2),$ $...,$ $ (f_1, ..., f_{r-1})$ are known.
We denote the respective minimizers by $g^1,$  $g^2,$ ..., $g^{r-1}.$
In order to compute a minimizer for data $(f_1,...,f_{r}),$ 
we first create a set of $r$ minimizer candidates $h^1,$ ..., $h^{r},$ each of length $r.$
These minimizer candidates are given by
\begin{equation}\label{eq:potts_candidate}
	h^\ell = (g^{\ell-1} , \underbrace{\mu_{[\ell,r]},..., \mu_{[\ell,r]}}_{\text{Length } (r - \ell +1)}), \\
\end{equation}
where $g^0$ is the empty vector and 
$\mu_{[\ell, r]}$ denotes the mean value of data $f_{[\ell, r]} = (f_{\ell}, ..., f_{r}).$
 Among the candidates $h^\ell,$ one with the least Potts functional value is a minimizer for the data $f_{[1, r]}.$

In \cite{friedrich2008complexity}, Friedrich et al.~proposed the following $\Oc(n^2)$ time and $\Oc(n)$ space algorithm.
They observed that the functional values of a minimizer $P_\gamma(g^r)$ for data 
$f_{[1,r]}$ can be computed directly from the functional values $P_\gamma(g^1),$ ..., $P_\gamma(g^{r-1})$ 
and the squared mean deviations of the data
$f_{[1, r]},$ ..., $f_{[r, r]}.$ 
Indeed, using \eqref{eq:potts_candidate}, the functional value of the minimizer $g^r$ is given (setting  $P_\gamma(g^0) = -\gamma$) by 
\begin{equation}\label{eq:potts_value_candidate}
\begin{split}
P_\gamma(g^r) =& \min_{\ell=1,...,r} \{  P_{\gamma}(g^{\ell-1})  +  \gamma 
+ 	d_{[\ell, r]} \},
\end{split}
\end{equation} 
where  $d_{[\ell, r]}$ denotes the squared deviation from the mean value
\[
	d_{[\ell, r]} = \min_{y \in \R} \| y - f_{[\ell,r]}\|^2_2 = \| (\mu_{[\ell, r]},...,\mu_{[\ell, r]})  - f_{[\ell,r]}\|^2_2.
\]
The evaluation of \eqref{eq:potts_value_candidate} is $\Oc(n)$ if we precompute  the first and second moments of data $f_{[\ell,r]}.$
If $\ell^{\ast}$ denotes the minimizing argument in \eqref{eq:potts_value_candidate}, then $(\ell^{\ast} -1)$ indicates the rightmost jump location at step $r,$ which is stored as $J(r).$ 
The jump locations of a solution $g^r$ are thus $J(r),$ $J(J(r)),$ $J(J(J(r))),...;$
the values of $g^r$ between two consecutive jumps are given by the  mean  of  $f$ on this interval. 
Note that  we only have to compute and  store the jump locations $J(r)$ and the minimal Potts functional value $P_\gamma(g^r)$ in each iteration. 
The reconstruction of the minimizer from the jump locations only has to be done once for $g^n$ at the end; it is thus not time-critical. 

The algorithm for solving \eqref{eq:potts1D} consists of two nested loops for $r =1,...,n$ and $\ell = 1,...,r,$ which amounts to $n(n+1)/2$ iterations in total.
Typically, a significant amount of configurations are unreachable
and thus can be skipped \cite[Theorem 2]{storath2014fast}.
The time complexity is still $\Oc(n^2),$ but the practical runtime 
is improved 
by a fourfold to fivefold factor.
We refer the reader to \cite{storath2014fast} for the complete flow diagram of the accelerated algorithm.
 We remark that a minimizer of the univariate Potts problem 
 need not be unique, which explains the \enquote{$\in$} in \eqref{eq:anisoADMM}, \eqref{eq:rowWise}, \eqref{eq:colWise}, and \eqref{eq:generalADMM}.
 However, those data
  which lead a non-unique minimizer form a negligible set
 \cite{wittich2008complexity}.

Our second subproblem is the solution of a classical $L^2$ Tikhonov regularization
\begin{equation}\label{eq:tikhonov}
 v^{k+1} = \argmin_{v}  \|A v - f\|_2^2 + \frac{\mu}{2} \| v - z  \|_2^2
\end{equation}
with some $z \in \R^{m\times n}.$
The unique minimizer of this problem is given by the solution of the normal equation 
\begin{equation}\label{eq:normalEq}
	(A^* A + \frac{\mu}{2} I) \,v = A^* f + \frac{\mu}{2}z.
\end{equation}
Here, $A^\ast$ is the adjoint of $A.$
This linear system can be solved using, for example, the conjugate-gradient method. In some cases we can exploit the structure of $A$ 
 for more efficient solution methods. 
 This is the case when $A$ is the Radon transform or a convolution operator (see Section \ref{sec:applications}). 

\subsection{General Data Terms}
Inspecting the ADMM iteration, 
we observe that the data term only appears in the first line.
That line consists of a classical Tikhonov regularization with the $L^2$ data term
$\|A v - f\|_2^2.$ 
Minimizers of that problem 
can be computed efficiently for many other data terms $d(u, f),$
such as $L^p$ data terms, $p \geq 1,$ of the form
\[
	d(v, f) = \| Av - f \|_p^p
\]
or a Huber data term which is a hybrid between $L^1$ and $L^2$ data terms \cite{chambolle2011first, weinmann2013total}.
In general, our algorithm is applicable whenever the proximity operator $\prox_{d(\cdot, f)/\mu}$ of $d(\cdot,f),$
defined by
\begin{equation*}
 	\prox_{d(\cdot, f)/\mu }(z) = \argmin_{v}  d(v, f) + \frac{\mu}{2} \| v - z  \|_2^2,
\end{equation*}
can be evaluated efficiently.
This is often the case when $d(\cdot, f)$ is a convex functional.
We refer to \cite{boyd2004convex} for an extensive overview on strategies for convex optimization.

\subsection{Convergence}

In this section, we show that Algorithm \eqref{eq:generalADMM} 
 converges
in the prototypical case $\nu_k = 0$  for all $k$
(which implies that the $\rho^k = 0$ for all $k$.)
We leave a convergence proof for Algorithm \eqref{eq:generalADMM} 
with general $\nu_k$ as an open problem.
For the proof, we use methods developed  in \cite{storath2013jump}.

\begin{theorem}\label{thm:convergence}
Let the sequence $(\mu_k)_{k\in \N}$ be increasing and satisfy $\sum_k \mu_k^{-1/2} < \infty.$ 
Further, let $\nu_k=0$ for all $k.$
Then, the iteration \eqref{eq:generalADMM} converges in the sense that
\begin{align}
(u_1^k,\ldots, u_S^k,v^k) &\to (u_1^*,\ldots, u_S^*,v^*) \quad \text{ with } \quad u_1^* = \ldots = u_S^* = v^*,  \notag \\
\tfrac{\lambda_s^k}{\mu_k} &\to 0  \quad  \text{ for all } \quad s \in \{1,\ldots,S\}.  \label{eq:whatWSh}
\end{align}
\end{theorem}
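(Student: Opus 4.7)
My plan is to adapt the convergence argument of \cite{storath2013jump} to the multi-split iteration \eqref{eq:generalADMM} with an increasing penalty sequence. Since $\nu_k = \rho^k = 0$, the updates simplify to $u_s^{k+1} \in \argmin_{u_s} \gamma\omega_s\|\nabla_{p_s}u_s\|_0 + \tfrac{\mu_k}{2}\|u_s - w_s^k\|_2^2$ with $w_s^k = v^k + \lambda_s^k/\mu_k$, together with a classical Tikhonov solve for $v^{k+1}$ with target $z^k = \tfrac{1}{S}\sum_s(u_s^{k+1} - \lambda_s^k/\mu_k)$. I write $\tilde\lambda_s^k := \lambda_s^k/\mu_k$ and $r_s^{k+1} := v^{k+1} - u_s^{k+1}$, so that the dual update becomes $\tilde\lambda_s^{k+1} = (\mu_k/\mu_{k+1})(\tilde\lambda_s^k + r_s^{k+1})$.

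The first step is a set of energy bounds obtained from minimality in each subproblem. Testing the $u_s$-update with the admissible point $v^k$ (admissible because $J_s(u) := \gamma\omega_s\|\nabla_{p_s}u\|_0$ is finite for every $u$) yields $J_s(u_s^{k+1}) + \tfrac{\mu_k}{2}\|u_s^{k+1} - v^k\|_2^2 - \mu_k\langle \tilde\lambda_s^k, u_s^{k+1} - v^k\rangle \leq J_s(v^k)$. Combining these $S$ inequalities with the analogous first-order optimality for $v^{k+1}$ and using $\mu_k r_s^{k+1} = \lambda_s^{k+1} - \lambda_s^k$ to absorb the cross terms, a telescoping sum controls $\mu_k\|r_s^{k+1}\|_2^2$ uniformly in $k$. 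Consequently $\|r_s^{k+1}\|_2 = O(\mu_k^{-1/2})$, and the integer-valued counts $\|\nabla_{p_s} u_s^{k+1}\|_0$ stay uniformly bounded, so they stabilize along subsequences.

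The second step is to show $\tilde\lambda_s^k \to 0$. From the recursion and $\mu_k \leq \mu_{k+1}$,
\begin{equation*}
\|\tilde\lambda_s^{k+1} - \tilde\lambda_s^k\|_2 \leq \bigl(1 - \tfrac{\mu_k}{\mu_{k+1}}\bigr)\|\tilde\lambda_s^k\|_2 + C\,\mu_k^{-1/2}.
\end{equation*}
A Gronwall-type bootstrap exploiting $\sum_k \mu_k^{-1/2} < \infty$ first bounds $\|\tilde\lambda_s^k\|_2$ and then shows that $(\tilde\lambda_s^k)_k$ is Cauchy. Any accumulation point $\tilde\lambda_s^\infty \neq 0$ can be ruled out by reinserting it into the inequality of the first step, which would force $\mu_k\|r_s^{k+1}\|_2^2$ to diverge; hence $\tilde\lambda_s^k \to 0$.

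The third step concludes convergence of the primal variables. With $\tilde\lambda_s^k \to 0$ and $r_s^{k+1} \to 0$, the targets satisfy $w_s^k - v^k \to 0$ and, via the Tikhonov normal equation \eqref{eq:normalEq}, $v^{k+1} - z^k \to 0$. Because the effective jump weight $\gamma\omega_s/\mu_k$ vanishes while the integer-valued jump sets are eventually fixed, the proximal step forces $\|u_s^{k+1} - w_s^k\|_2 \to 0$; together with $r_s^{k+1} \to 0$ this yields a common limit $u_s^* = v^*$, and promoting subsequential to full convergence follows from uniqueness of this limit. The main obstacle is the second step: controlling the scaled dual variables under an increasing penalty. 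The summability hypothesis $\sum_k \mu_k^{-1/2} < \infty$ is tailored precisely to dominate the $O(\mu_k^{-1/2})$ primal residuals in the Gronwall argument. The non-convexity of the Potts term causes no additional difficulty, since all estimates rely only on energy comparisons rather than any subdifferential calculus.
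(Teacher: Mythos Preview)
Your overall architecture (energy comparison, then control of scaled duals, then primal convergence) matches the paper, but the execution of your first two steps contains a genuine gap and is much more involved than necessary.

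\textbf{The first step.} You test the $u_s$-subproblem with the competitor $v^k$, which leaves the cross term $-\mu_k\langle\tilde\lambda_s^k,\,u_s^{k+1}-v^k\rangle$ that you then try to absorb via ``telescoping with the first-order optimality for $v^{k+1}$.'' This is not fleshed out, and it is not clear it works: the cross term involves $u_s^{k+1}-v^k$, whereas the dual update gives $\mu_k r_s^{k+1}=\lambda_s^{k+1}-\lambda_s^k$ with $r_s^{k+1}=v^{k+1}-u_s^{k+1}$; the mismatch between $v^k$ and $v^{k+1}$ introduces exactly the quantity $\|v^{k+1}-v^k\|$ you are trying to control. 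The paper avoids all of this by testing with $w_s^k=v^k+\tilde\lambda_s^k$ instead of $v^k$. Then the quadratic term on the right vanishes and one gets immediately
\[
\|u_s^{k+1}-w_s^k\|_2^2\;\le\;\frac{2\gamma\omega_s}{\mu_k}\,\|\nabla_{p_s} w_s^k\|_0\;\le\;\frac{2\gamma\omega_s L}{\mu_k},
\]
using only the crude bound $\|\nabla_{p_s}z\|_0\le L=mn$. No telescoping, no coupling with the $v$-update, no control of $\tilde\lambda_s^k$ needed at this stage.

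\textbf{The second step.} Your Gronwall recursion for $\tilde\lambda_s^k$ is predicated on $\|r_s^{k+1}\|_2=O(\mu_k^{-1/2})$, which you have not established (your first step, even if carried out, bounds $u_s^{k+1}-v^k$, not $u_s^{k+1}-v^{k+1}$). The contradiction argument ruling out a nonzero accumulation point is also only sketched. The paper proceeds differently and more directly: from the bound above it shows $\|Av^{k+1}-f\|$ is bounded (via the recursion $\|Av^{k+1}-f\|\le C\mu_k^{-1/2}+\|Av^k-f\|$ and the summability hypothesis), then that $\mu_k\|v^{k+1}-z^k\|^2$ is bounded, and hence $\|v^{k+1}-v^k\|\le C''\mu_k^{-1/2}$, so $v^k$ is Cauchy. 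Only \emph{after} $v^k$ converges does one deduce $\tilde\lambda_s^k\to 0$ from the simple identity $\lambda_s^{k+1}/\mu_k=-(u_s^{k+1}-w_s^k)+(v^{k+1}-v^k)$, both summands already known to vanish. Convergence of $u_s^k$ then follows from $v^{k+1}-u_s^{k+1}=(\lambda_s^{k+1}-\lambda_s^k)/\mu_k\to 0$.

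In short: replace your test point $v^k$ by $w_s^k$ and the argument collapses to a sequence of one-line estimates; the Gronwall/accumulation-point machinery becomes unnecessary.
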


\begin{proof}
We denote the $S$ functionals appearing in the first $S$ lines of \eqref{eq:generalADMM} by $F^{k}_s,$ i.e.,
$$
   F^{k}_s (u_s) = \frac{2\gamma \omega_s}{\mu_k}  \| \nabla_{p_s} u_s \|_0   +\| u_s -  (v^{k} + \frac{\lambda_s^k}{\mu_k}) \|_2^2.
$$
Using this notation, we  rewrite the first $S$ lines of \eqref{eq:generalADMM} as $u_s^{k+1} \in \argmin_u F^{k}_s (u_s)$ 
for all $s \in \{1,\ldots,S\}.$
We first estimate the distance $\|u_s^{k+1} - (v^k + \tfrac{\lambda_s^k}{\mu_k})\|_2.$ To that end, we note that
$
 F^{k}_s (u_s^{k+1}) \leq F^{k}_s \left(v^k+ \tfrac{\lambda_s^k}{\mu_k} \right)
$
which holds true since $u_s^{k+1}$ minimizes $F^{k}_s.$
Applying the definition of $F^{k}_s,$ we get
\begin{align*}
   \gamma \omega_s \|\nabla_{p_s} u_s^{k+1}\|_0  +& \frac{\mu_k}{2} \|u_s^{k+1} - \left(v^k + \frac{\lambda_s^k}{\mu_k} \right)\|_2^2 
   \leq  \gamma \omega_s \|\nabla_{p_s} \left(v^k + \frac{\lambda_s^k}{\mu_k}\right)\|_0 \leq \gamma \omega_s L,
\end{align*}
where 
$L= N M$ is the size of the considered $(N \times M)$ image.
This is because $ \|\nabla_{p_s} z \|_0 \leq M N$ for any $p_s$ and any data $z.$ 
Since the first summand on the left-hand side is nonnegative, we get that
\begin{equation}\label{eq:est1}
\|u_s^{k+1} - \left(v^k + \frac{\lambda_s^k}{\mu_k} \right)\|_2^2 \leq \frac{\gamma \omega_s L}{\mu_k}.
\end{equation}
In particular, for all $s \in \{1,\ldots,S\},$ we obtain that
\begin{equation} \label{eq:lim1}
   \lim_{k \to \infty} u_s^{k+1} - \left( v^k + \frac{\lambda_s^k}{\mu_k}   \right) =0.
\end{equation}

We now draw our attention to the $(S+1)$th line of \eqref{eq:generalADMM}. We denote the corresponding functional by $G^{k},$
i.e.,
$$
   G^{k}(v) =  \| A v - f\|_2^2
               	+ \frac{\mu_k S}{2} \| v -  \frac{1}{S}  \sum_{s=1}^S (u^{k+1}_s - \frac{\lambda_s^{k}}{\mu_k} )\|_2^2. \\
$$
The minimality of $v^{k+1}$ implies the inequality
$$
G^k(v^{k+1}) \leq G^k  \left(  \frac{1}{S}  \sum_{s=1}^S (u^{k+1}_s - \frac{\lambda_s^{k}}{\mu_k})      \right).
$$
We now apply the definition of $G^k$ to estimate 
\begin{align}
    \|Av^{k+1} - f\|_2^2 &+ \frac{\mu_k S}{2} \|v^{k+1} - \left( \frac{1}{S} \sum_{s=1}^S (u^{k+1}_s - \frac{\lambda_s^{k}}{\mu_k})   \right)\|_2^2 \notag \\ 
    \leq & \|A    \left(  \frac{1}{S}\sum_{s=1}^S (u^{k+1}_s - \frac{\lambda_s^{k}}{\mu_k})   \right) - f\|_2^2 \notag\\
    \leq &\|A \left(  \frac{1}{S}\sum_{s=1}^S (u^{k+1}_s - \frac{\lambda_s^{k}}{\mu_k} - v^k)  \right) + A v^k -f\|_2^2  \notag\\
    \leq & \left( \|A\| \| \frac{1}{S}\sum_{s=1}^S (u^{k+1}_s - \frac{\lambda_s^{k}}{\mu_k} - v^k) \|_2 +\|Av^k - f\|_2 \right)^2.  \label{eq:l3}
\end{align}
Here, $\|A\|$ is the operator norm of $A$ acting on $\ell^2$.
We now combine \eqref{eq:l3} and $\eqref{eq:est1}$ to estimate the magnitude of the residuals $Av^{k+1} - f.$  We get
$$
    \|Av^{k+1} - f\| \leq \frac{C}{\sqrt{\mu_{k} }} + \|Av^{k} - f\|, 
$$
where $C>0$ is a constant that only depends on $\gamma, \omega_s$ $L,$ and $\|A\|.$
Solving this recursion yields
$$
    \|Av^{k+1} - f\| \leq  C \sum_{j=1}^k \frac{1}{\sqrt{\mu_j}} + \|A v^0 -f\|,
$$
which shows that the sequence of residuals $(Av^{k+1} - f)_{k \in \N}$ is bounded.

We consider the right-hand term in the first line of \eqref{eq:l3}. Then we apply \eqref{eq:l3} to get
\begin{align*}
\tfrac{\mu_k S}{2} \|v^{k+1} &-  \frac{1}{S}\sum_{s=1}^S (u^{k+1}_s - \frac{\lambda_s^{k}}{\mu_k})   \|^2 \\
&\leq    (\|A\| \|\frac{1}{S}\sum_{s=1}^S (u^{k+1}_s - \frac{\lambda_s^{k}}{\mu_k} - v^k)\| + \|Av^k - f\|)^2  \\
&\leq    (\|A\| \frac{1}{S}\sum_{s=1}^S \|(u^{k+1}_s - \frac{\lambda_s^{k}}{\mu_k} - v^k)\| + C')^2.
\end{align*}
The last inequality is a consequence of the boundedness of the residuals 
where we denote the bound by the positive constant $C'$ (which is independent of $k$). 
We now apply \eqref{eq:lim1} to the first summand
to conclude that the sequence (with respect to $k$)
\begin{equation} \label{eq:est2}
\mu_k \|v^{k+1} -    \frac{1}{S}\sum_{s=1}^S (u^{k+1}_s - \frac{\lambda_s^{k}}{\mu_k}) \|_2^2 \text{ is bounded}.
\end{equation}

We use this fact to establish the convergence of the sequence $v^k$ by showing that it is a Cauchy sequence. 
We first apply the triangle inequality to get
$$
   \|v^{k+1} - v^k\| \leq \|v^{k+1} -  \frac{1}{S}\sum_{s=1}^S (u^{k+1}_s - \frac{\lambda_s^{k}}{\mu_k} )   \| + 
   \| \frac{1}{S}\sum_{s=1}^S (u^{k+1}_s - \frac{\lambda_s^{k}}{\mu_k} - v^k)\|.
$$
We now apply \eqref{eq:est2} to the first summand on the right-hand side    
as well as \eqref{eq:est1} to the second summand on the right-hand side to obtain 
$$
    \|v^{k+1}-v^k\| \leq \frac{C''}{\sqrt{\mu_k}}
$$
for some constant $C'' >0$ which is again independent of $k$.
The assumption on the sequence $\mu_k$ guarantees that $v^k$ is a Cauchy sequence and hence that $v^k$ converges to some $v^*$.

To establish the last statement in \eqref{eq:whatWSh},
we rewrite each of the last $S$ lines in \eqref{eq:generalADMM} to obtain the identity
\begin{equation}\label{eq:thirdLineConsequ}
   \frac{\lambda_s^{k+1}}{\mu_{k}} =  (\frac{\lambda_s^k}{\mu_k} +  u_s^{k+1} -v ^k)+(v^k-v^{k+1}).
\end{equation}
By \eqref{eq:lim1} and \eqref{eq:est2}, each term in parenthesis converges to $0.$ 
Hence,
$$
   \lim_{k \to \infty} \frac{\lambda_s^{k+1}}{\mu_k} = 0.
$$
Since we assume that the sequence $\mu_k$ is nondecreasing, we have that
$\mu_k/\mu_{k+1} \leq 1$ and thus, for all $s= 1,\ldots,S$, 
$$
   \lim_{k \to \infty} \frac{\lambda_s^k}{\mu_k} = 0.
$$
This shows the last statement in \eqref{eq:whatWSh}. Finally, we rewrite the penultimate line of \eqref{eq:generalADMM} as $v^{k+1} - u_s^{k+1}$ $= (\lambda_s^{k+1} - \lambda_s^{k})/\mu_k$  
to obtain the inequality
$$
   \| u_s^{k+1} - v^{k+1} \| \leq \tfrac{\|\lambda_s^{k+1}\|} {\mu_k} + \tfrac{\|\lambda_s^{k}\|} {\mu_k} 
   \to 0.  
$$
This means that  $u_s^k - v^k \to 0$ for all $s= 1,\ldots,S $ and, since $v^k$ converges,
also each $u^k_s$ converges and the corresponding limit $u_s^*$ equals $v^*,$ which completes the proof.
\end{proof}

\section{Application to  Radon Data}
\label{sec:applications}

The result of our method is
a joint reconstruction and segmentation of the imaged object. 
More precisely, we obtain a piecewise constant image
which induces a partition on the image domain.
We demonstrate the applicability to
tomographic problems 
whose image acquisition process 
can be described in terms of the classical Radon transform \cite{Natterer86}.
We here consider x-ray computed tomography (CT) and positron emission tomography (PET). 
Recall that the Radon transform is defined by 
\begin{equation}\label{eq:radon}
	\Rc u(\theta,s) = \int_{-\infty}^\infty u(s\theta + t\theta^\bot)\d t,
\end{equation} 
where $s\in \R,$  $\theta\in S^1$, and $\theta^\bot$ is the unit
vector $\pi/2$ radians counterclockwise from $\theta.$

% !TEX root =./inversePotts2D.tex
\begin{figure}
\def\figfolder{newExperiments/radonAnglesNew/nAngles}
\def\figwidth{0.23\textwidth}
\footnotesize
\centering
\def\nAnglesA{17}
\def\nAnglesB{12}
\def\nAnglesC{7}
\def\figfolderA{\figfolder\nAnglesA/}
\def\figfolderB{\figfolder\nAnglesB/}
\def\figfolderC{\figfolder\nAnglesC A2/}
\begin{tabular}[t]{p{0.13\textwidth}rrr}
%{\textwidth}
				\toprule 
			%\multicolumn{3}{c}{Noise level}\\
 & \normalsize ${\input{\figfolderA/nAngles.txt}}$ Angles
& \normalsize ${\input{\figfolderB/nAngles.txt}}$ Angles
& \normalsize ${\input{\figfolderC/nAngles.txt}}$  Angles \\
%Noise level \nAnglesA &  Noise level \nAnglesB & Noise level \nAnglesC \\
   \toprule
   \normalsize FBP (Ram-Lak filter) &	 \includegraphicstotab[width=\figwidth]{\figfolderA /recFBPRamLak} &
   			\includegraphicstotab[width=\figwidth]{\figfolderB /recFBPRamLak} &
			\includegraphicstotab[width=\figwidth]{\figfolderC /recFBPRamLak}\\
	&	PSNR: ${\input{\figfolderA/psnrFBPRamLak.txt}}$ &
  PSNR: ${\input{\figfolderB/psnrFBPRamLak.txt}}$ &
  PSNR: ${\input{\figfolderC/psnrFBPRamLak.txt}}$ \\	
  & MSSIM: ${\input{\figfolderA/ssimFBPRamLak.txt}}$ & 
  MSSIM: ${\input{\figfolderB/ssimFBPRamLak.txt}}$ & 
  MSSIM: ${\input{\figfolderC/ssimFBPRamLak.txt}}$ \\
  	\midrule
\normalsize \vspace{-4ex} FBP (Hamming window, optimized to PSNR) 
&	 \includegraphicstotab[width=\figwidth]{\figfolderA /recFBP} &
   			\includegraphicstotab[width=\figwidth]{\figfolderB /recFBP} &
			\includegraphicstotab[width=\figwidth]{\figfolderC /recFBP}\\
	&	PSNR: ${\input{\figfolderA/psnrFBP.txt}}$ &
  PSNR: ${\input{\figfolderB/psnrFBP.txt}}$ &
  PSNR: ${\input{\figfolderC/psnrFBP.txt}}$ \\	
  & MSSIM: ${\input{\figfolderA/ssimFBP.txt}}$ & 
  MSSIM: ${\input{\figfolderB/ssimFBP.txt}}$ & 
  MSSIM: ${\input{\figfolderC/ssimFBP.txt}}$ \\
  	\midrule
  \normalsize TV & \includegraphicstotab[width=\figwidth]{\figfolderA/recTV} &
	\includegraphicstotab[width=\figwidth]{\figfolderB/recTV} &
	\includegraphicstotab[width=\figwidth]{\figfolderC/recTV} \\
&   $\alpha = {\input{\figfolderA/lambda.txt}}$ &
    $\alpha = {\input{\figfolderB/lambda.txt}}$ & 
    $\alpha = {\input{\figfolderC/lambda.txt}}$  \\
	&  PSNR:	 ${\input{\figfolderA/psnrTV.txt}}$ &
  PSNR: ${\input{\figfolderB/psnrTV.txt}}$ &
  PSNR: ${\input{\figfolderC/psnrTV.txt}}$ \\
  & MSSIM: ${\input{\figfolderA/ssimTV.txt}}$ & 
  MSSIM: ${\input{\figfolderB/ssimTV.txt}}$ & 
  MSSIM: ${\input{\figfolderC/ssimTV.txt}}$ \\
  \midrule
 \normalsize Our method & \includegraphicstotab[width=\figwidth]{\figfolderA/recPotts} &
	\includegraphicstotab[width=\figwidth]{\figfolderB/recPotts} &
	\includegraphicstotab[width=\figwidth]{\figfolderC/recPotts} \\
		&   $\gamma = {\input{\figfolderA/gamma.txt}}$ &
    $\gamma = {\input{\figfolderB/gamma.txt}}$ & 
    $\gamma = {\input{\figfolderC/gamma.txt}}$  \\
	&  PSNR:	 ${\input{\figfolderA/psnrPotts.txt}}$ &
  PSNR: ${\input{\figfolderB/psnrPotts.txt}}$ &
  PSNR: ${\input{\figfolderC/psnrPotts.txt}}$ \\
  & MSSIM: ${\input{\figfolderA/ssimPotts.txt}}$ & 
  MSSIM: ${\input{\figfolderB/ssimPotts.txt}}$ & 
  MSSIM: ${\input{\figfolderC/ssimPotts.txt}}$ \\
    & RI: ${\input{\figfolderA/randIndexPotts.txt}}$ & 
  RI: ${\input{\figfolderB/randIndexPotts.txt}}$ & 
  RI: ${\input{\figfolderC/randIndexPotts.txt}}$ \\
  \bottomrule
	\end{tabular}
	\caption{
	Reconstruction of the Shepp-Logan phantom  from highly undersampled Radon data. 
	Filtered backprojection produces strong artifacts.
	Total variation regularization gives an almost perfect reconstruction up to about 17 projection angles 
	but the quality decreases significantly for fewer angles.
	The proposed Potts based method yields a high quality reconstruction/segmentation from as few as $7$ projection angles.
	}
\label{fig:radonAnglesNew}
\end{figure}

\paragraph{Measurement of the segmentation quality.}
We will focus on data from piecewise-constant images.
This has the advantage 
that the ground truth, i.e. the desired partitioning, 
is induced directly by the original image.
(For natural images, there is typically no ground truth available
because segmentation is based on subjective impressions.)
Having a ground truth at hand, we can objectively measure the quality of the segmentation
using the Rand index (RI) \cite{rand1971objective, arbelaez2011contour}, 
which we briefly explain.
Let $X =\{x_1,...,x_N\}$ be a given set of points 
and let $Y$ and $Y'$ be two partitionings of the this set. 
(In our case, $X$ is the set of pixels, and $Y$ and $Y'$ are the partitioning 
given by the ground truth and the result of our method, respectively.)
The Rand index (RI) is defined by
\[\textstyle
	\RI(Y, Y') = \binom{N}{2}\sum_{i < j}^N t_{ij}
\]
where $t_{ij}$ is equal to one 
if there exist $k$ and $k'$ such that both $x_i$ and $x_j$ are in both $Y_k$ and $Y'_{k'},$
or if $x_i$ is in both $Y_k$ and $Y'_{k'}$ while $x_j$ is in neither $Y_k$ or $Y'_{k'}.$
The Rand index is bounded from above by $1;$  a higher value means a better match.
For the evaluation of the Rand index we used the implementation of K.~Wang available at the Matlab File Exchange.

\paragraph{Parameter choice for the algorithm.}
Unless stated otherwise, the setup for the numerical experiments is as follows:
 We use the ADMM iteration \eqref{eq:generalADMM} with the  coupling sequence $\mu_k = 10^{-7} \cdot k^{\tau}$  with $\tau = 2.01,$ and we choose $\nu_k$ identically zero. This choice satisfies the hypothesis of Theorem \ref{thm:convergence}.
For the experiments involving the Shepp-Logan phantom, we observed the best results using the neighborhood system 
$\Nc_1$ of \eqref{eq:N1} while   the neighborhood $\Nc_2$ of \eqref{eq:N2} gave better result for the more realistic images (Figures \ref{fig:PET}, \ref{fig:gaussianblur}, and \ref{fig:motionblur}); see also the comparison in Figure \ref{fig:isoVsAniso}.
For the solution of the Tikhonov type problem \eqref{eq:tikhonov},
we use Matlab's conjugate-gradient method on the normal equation \eqref{eq:normalEq}.
We use a \enquote{warm start}, which means that we use  the solution of the previous iteration $v^k$
as initial guess for $v^{k+1}.$ Then the conjugate-gradient iteration converges 
typically in few steps.  
The splitting variables $v,$ $u_s$ for $s \geq 1,$ and the Lagrange multipliers are all initialized with $0.$
We stop the iteration when the relative deviation of $u_1$  and $u_2,$ i.e.
$\|u_1 - u_2\|_2 / (\|u_1 \|_2 + \| u_2\|_2),$
 falls below some threshold.
In our experiments, we have chosen the tolerance $10^{-3};$  
 we did not observe an improvement of the results for lower thresholds.
  The distance of $u_1$ and $u_2$ is a natural choice because these variables
 appear in all the neighborhood systems including the anisotropic system. 
  For larger neighborhoods, other choices are possible,
 because  all $u_s$ converge to the same limit by Theorem \ref{thm:convergence}. 
Although the method is not independent 
from the ordering of the vectors in the neighborhood system, 
we observed no significant difference for other orderings.
In our simulated experiments, the noise is Gaussian distributed with zero mean and standard deviation 
$\sigma = \text{noiselevel} \cdot \| f'\|_\infty,$ where $f'$ is the clean data.

\subsection{Radon Data with Sparse Angular Sampling}

\begin{figure}[t]

\def\figwidth{0.23\columnwidth}
\def\figspace{\hspace{0.1\textwidth}}
\centering
\def\figfolder{newExperiments/pottsGammaInfluence12/}
	\def\figfolderA{newExperiments/pottsGammaInfluence12/gamma0.05/}
		\def\figfolderB{newExperiments/pottsGammaInfluence12/gamma0.2/}
			\def\figfolderC{newExperiments/pottsGammaInfluence12/gamma0.5/}
		\def\figfolderD{newExperiments/pottsGammaInfluence12/gamma1/}
	\begin{subfigure}[t]{\figwidth}
	\includegraphics[width=\columnwidth]{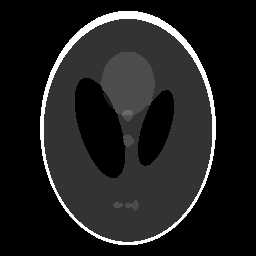}
	\caption{$\gamma = \protect\input{\figfolderA gamma.txt}$}
	\end{subfigure}
	\hfill
			\begin{subfigure}[t]{\figwidth}
	\includegraphics[width=\columnwidth]{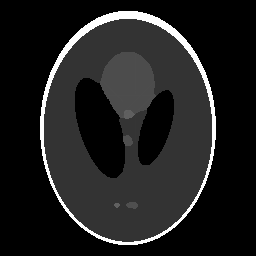}
	\caption{$\gamma = \protect\input{\figfolderB gamma.txt}$}
	\end{subfigure}
	\hfill
				\begin{subfigure}[t]{\figwidth}
	\includegraphics[width=\columnwidth]{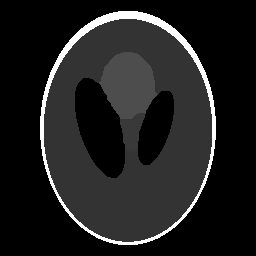}
	\caption{$\gamma = \protect\input{\figfolderC gamma.txt}$}
	\end{subfigure}
	\hfill
			\begin{subfigure}[t]{\figwidth}
	\includegraphics[width=\columnwidth]{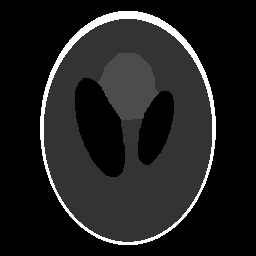}
	\caption{$\gamma = \protect\input{\figfolderD gamma.txt}$}
	\end{subfigure}
	\vspace{-0.2cm}
	\caption{Influence of the model parameter $\gamma$ on the result ($\protect\input{\figfolder nAngles.txt}$ noisefree projections of Radon data).
	  For higher  $\gamma,$ the small segments vanish but the gross structures are recovered.
}
	\label{fig:gamma}
\end{figure}

We demonstrate  
the robustness of our method to drastic angular undersampling (Figures \ref{fig:intro}, \ref{fig:radonAnglesNew},
\ref{fig:gamma}, and \ref{fig:noiseSparse}).
To set the results into context, 
we additionally show the results
of filtered backprojection (FBP).
FBP is the standard algorithm in many commercial CT scanners
\cite{pan2009commercial}.
We use the  Matlab function \texttt{iradon}, both with a standard Ram-Lak filter 
and with a Hamming window.
The  cutoff frequency for the Hamming window
was tuned in steps of $0.01$ with respect to PSNR. 
Recall that the PSNR is given by
\(
	\mathrm{PSNR}(u) = 10 \log_{10}(  ( m n \| g\|_\infty^2)/  \| g - u \|_2^2)
\)
where $g \in \R^{m\times n}$ is the ground truth.
For comparison, we further use the mean structural similarity index (MSSIM) \cite{wang2004image}.
We use Matlab's function \texttt{ssim} with standard parameters for the computation of the MSSIM. 
The MSSIM is bounded from above by $1.$ For both PSNR and MSSIM, higher values are better.  
We also compare with total variation regularization
which uses
the total variation $\alpha \| \nabla u \|_1,$  $\alpha >0,$ as regularizing term.
We follow the implementation of the Chambolle-Pock  algorithm \cite{chambolle2011first} provided by G.~Peyre \cite{peyre2011numerical}.

In Figure \ref{fig:radonAnglesNew}, 
we observe that the classical reconstruction methods perform poorly 
when using only few projection angles.
The standard FBP reconstruction produces streak artifacts which are typical for angular undersampling.
The FBP reconstruction using optimized Hamming window smoothes out the edges.
Total variation minimization achieves a high-quality reconstruction from $17$ projections,
but the quality decreases significantly for fewer angles.
(Compare similar observations in \cite{chartrand2007exact} for MRI-type data.)
In contrast, the proposed method achieves an almost perfect segmentation 
from as few as $7$ projections.

In Figure \ref{fig:intro}, we observe that the classical reconstruction combined 
with subsequent segmentation 
leads to poor results in a sparse angular setup.
For the segmentation part in Figure \ref{fig:intro}(c), we used the $\alpha$-expansion graph-cut algorithm based 
on max-flow/min-cut of the library \texttt{GCOptimization 3.0} of O.~Veksler  and A.~Delong \cite{boykov2001fast, boykov2004experimental, kolmogorov2004energy}, which is a state-of-the-art image segmentation algorithm.

In Figure \ref{fig:gamma}, we illustrate the  influence
of the model parameter $\gamma$ to the result. 
We observe that for higher $\gamma$ smaller details vanish
but the macrostructures are still recovered.

In Figure \ref{fig:noiseSparse}, we add Gaussian noise to the observations.
Due to the noise, the small structures vanish in the results.
Nevertheless, the large scale structures are  recovered reliably.

\begin{figure}[t]
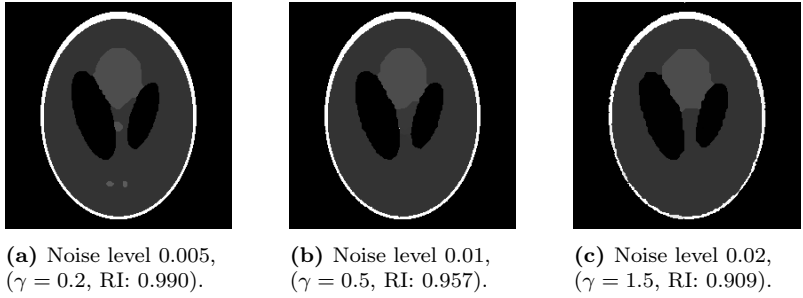


\def\figwidth{0.23\columnwidth}
\def\figspace{\hspace{0.05\textwidth}}
\centering
\def\figfolder{newExperiments/sparseAngleNoise/}
	\def\figfolderA{newExperiments/sparseAngleNoise/noiseLevel0.005/}
		\def\figfolderB{newExperiments/sparseAngleNoise/noiseLevel0.01/}
					\def\figfolderC{newExperiments/sparseAngleNoise/noiseLevel0.02/}
	\begin{subfigure}[t]{\figwidth}
	\includegraphics[width=\columnwidth]{\figfolderA recPotts}
	\caption{Noise level $\protect\input{\figfolderA sigma.txt},$\\
	($\gamma = \protect\input{\figfolderA gamma.txt},$ RI: $\protect\input{\figfolderA/randIndexPotts.txt}$).
	}
	\end{subfigure}
	\figspace
			\begin{subfigure}[t]{\figwidth}
	\includegraphics[width=\columnwidth]{\figfolderB recPotts}
	\caption{Noise level $\protect\input{\figfolderB sigma.txt},$\\
	($\gamma = \protect\input{\figfolderB gamma.txt},$ RI: $\protect\input{\figfolderB/randIndexPotts.txt}$).
	}
	\end{subfigure}
		\figspace
						\begin{subfigure}[t]{\figwidth}
	\includegraphics[width=\columnwidth]{\figfolderC recPotts}
	\caption{Noise level $\protect\input{\figfolderC sigma.txt},$ \\
	($\gamma = \protect\input{\figfolderC gamma.txt},$ RI: $\protect\input{\figfolderC/randIndexPotts.txt}$).
	}
	\end{subfigure}
	\vspace{-0.2cm}
	\caption{Effect of noise on the results ($\protect\input{\figfolder nAngles.txt}$ projection angles).
    The small structures vanish for higher noise levels whereas the large  structures are segmented reliably.
}
	\label{fig:noiseSparse}
\end{figure}

\subsection{Radon Data with Dense Angular Sampling}

% !TEX root =./inversePotts2D.tex
\begin{figure}
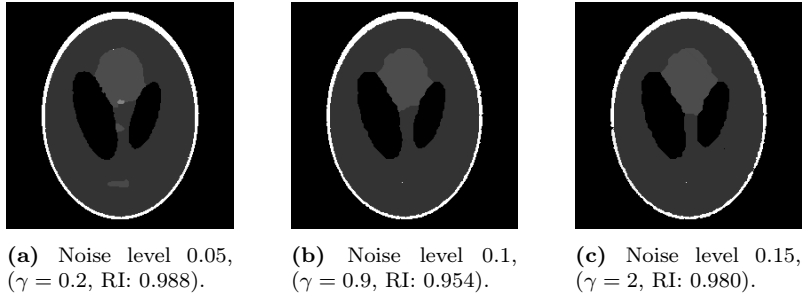

\def\figfolder{newExperiments/radonNoiseFull/NoiseLevel}
\def\figwidth{0.23\textwidth}
\def\figspace{\hspace{0.05\textwidth}}
\centering
			\def\noiseLevelA{0.05}
			\def\noiseLevelB{0.1}
	\def\noiseLevelC{0.15}
	
	\centering
	\begin{subfigure}[t]{\figwidth}
		\centering
		 \includegraphics[width=\columnwidth]{\figfolder \noiseLevelA/recPotts} 
	\caption{Noise level \noiseLevelA,
		($\gamma = \protect\input{\figfolder\noiseLevelA/gamma.txt},$ RI: $\protect\input{\figfolder\noiseLevelA/randIndexPotts.txt}$).
	}
	\end{subfigure}
	\figspace
		\begin{subfigure}[t]{\figwidth}
		\centering
		 \includegraphics[width=\columnwidth]{\figfolder \noiseLevelB/recPotts} 
	\caption{Noise level \noiseLevelB, 
		($\gamma = \protect\input{\figfolder\noiseLevelB/gamma.txt},$ RI: $\protect\input{\figfolder\noiseLevelB/randIndexPotts.txt}$).
	}
	\end{subfigure}
	\figspace
		\begin{subfigure}[t]{\figwidth}
		\centering
		 \includegraphics[width=\columnwidth]{\figfolder \noiseLevelC/recPotts} 
	\caption{Noise level \noiseLevelC,
		($\gamma = \protect\input{\figfolder\noiseLevelC/gamma.txt},$ RI: $\protect\input{\figfolder\noiseLevelC/randIndexPotts.txt}$).
		}
	\end{subfigure}
	\caption{%Reconstruction of the Shepp-Logan phantom 
   In the case of densely sampled Radon data (360 projection angles), 
   our method is able to segment the large geometric  structures for very high noise levels. 
   %(See Figure \ref{fig:intro}a for the original image.)
	}
\label{fig:radonNoiseFull}
\end{figure}

The costs of evaluating $\Rc$ and $\Rc^*$ increases
with the number of available projection angles.
In total we need between  $10^5$ and $10^6$ evaluations
of $\Rc$ and $\Rc^*.$
Therefore, using the conjugate gradients methods to solve 
 the Tikhonov problem \eqref{eq:generalADMM} can be time consuming
 for a dense angular sampling.
In this setup, we can use the following efficient
alternative implementation.
Here, the minimizer of the Tikhonov problem is computed using a 
filtered-backprojection-type formula
with a special filtering function which we describe next.
Recall that
the backprojection operator $\Rc^\ast$ is defined via
\begin{equation*}
	\Rc^\ast f (x) = \int_{S^1} f(\theta,x\cdot \theta)\d\theta.
\end{equation*}
Let $f = \Rc w$ for some $w \in L^2(\Omega)$ and $\alpha >0 $. Then,
the solution $v^*$ of the Tikhonov problem is given by
\begin{equation}
\label{eq:fbpTikhonov}
 	v^* = \argmin_{v \in L^2(\Omega)} \norm{\Rc v - f}_2^2 + \alpha\norm{v - z}_2^2
=\Rc^\ast H_{\alpha} (f-\Rc z),
\end{equation}
where the filtering operator is defined via
\[
  H_\alpha f  =  \Fc^{-1}_s ( h_\alpha \Fc_s f)
\]
with the filter function
\[
	h_\alpha(r) = \frac{\abs{r}}{4\pi+\alpha\abs{r}}.
\]
Above, $\Fc_s$ (and $\Fc^{-1}_s$) denotes the one dimensional Fourier transform (and its inverse) of a function
$f(\theta,s)$ with respect to the parameter $s.$
Since we have not found the statement in this form in the literature, 
we provide a short proof in the Appendix.
We remark that this procedure can be applied only to densely sampled data.
In a sparse angle setup, 
it produces unacceptably large errors.

In Figure \ref{fig:radonNoiseFull}, we  show  the result of our method
for the reconstruction of the Shepp-Logan phantom from Radon data with dense angular sampling (360 angles). 
For  very high noise levels the small details vanish but the large geometric structures are 
still segmented reliably.

\subsection{Real Radon Data from a PET Device}
Next, we apply our method to PET data. 
The underlying PET model generates ideal data of the form $a \cdot \Rc g$
where $\Rc g$ is the Radon transform of the imaged object $g$ and 
$a$ is a known function depending on the attenuation. 
Eliminating the known function $a,$ 
 we are exactly in the setup of the classical Radon transform. 
In Figure \ref{fig:PET}, we see the results for PET data
of a physical thorax phantom \cite{fessler1997grouped}.
Our method is able to jointly reconstruct and segment the anatomic structures (lung, spine, and thorax body) 
 even from sparsely sampled data.

\begin{figure}[t]
\def\figfolder{experiments/pet/}
\def\figwidth{0.23\columnwidth}
\centering
	\begin{subfigure}[t]{\figwidth}
		\centering
	\includegraphics[width=1\columnwidth]{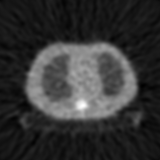}
	\caption{FBP using 192 projections.}
	\end{subfigure}
	\hfill
	\begin{subfigure}[t]{\figwidth}
	\centering
	\includegraphics[width=1\columnwidth]{\figfolder uPotts}
	\caption{Proposed method using 192 projections.}
	\end{subfigure}
	\hfill
	\begin{subfigure}[t]{\figwidth}
	\centering
	\includegraphics[width=1\columnwidth]{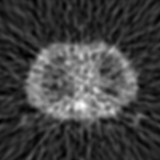}
		\caption{FBP using 24 projections.}
	\end{subfigure}
	\hfill
		\begin{subfigure}[t]{\figwidth}
	\centering
	\includegraphics[width=1\columnwidth]{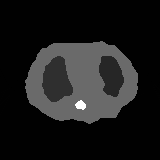}
	\caption{Proposed method using 24 projections.}
	\end{subfigure}
	\caption{
 Joint reconstruction and segmentation
  from undersampled PET data of a physical thorax phantom \cite{fessler1997grouped}. 
 The proposed method segments the anatomic structures 
 (lung, spine, and thorax body).
 Filtered backprojection results are shown 
 for comparison.
}
	\label{fig:PET}
\end{figure}

\section{Further Applications}

We briefly discuss further applications. We consider spherical Radon data  as well as blurred data.

\subsection{Spherical Radon Data}

The reconstruction of a function $u:\R^2\to\R$ from spherical averages plays an important role in the context of TAT/PAT. It has been intensively studied in recent years and still attracts much attention, cf. \cite{Ansorg:2012fk,Buehler:2011bv,Finch:2004hs,Haltmeier:2014hn,Kunyansky:2007dw,Natterer:2012hh,Wang:2009uz, Elbau:2012df} to mention only a few.  In the next example, we apply our method to this problem. Here, we assume the data are given by the spherical mean Radon transform
%The spherical mean Radon transform $\Mc$ is given by 
\begin{equation}
\label{eq:spherical radon transform}
	\Mc u(\theta(\varphi),t)=
	\displaystyle\int_{S^1} u(\theta(\varphi) + t\mathbf \zeta)\, \d\zeta,
\end{equation}
for some angles $\varphi \in [0,2\pi]$ and some radii $t\in(0,2]$, where $\theta(\varphi) = (\cos\varphi,\sin\varphi)$. In our experiment, we computed the spherical means of the Shepp-Logan head phantom for 7 equispaced angles and $512$ equispaced radii. (See Figure \ref{fig:intro}a for the original image.) 
The Tikhonov subproblem is solved using the standard conjugate gradient algorithm on the normal equation. For comparison, we show the result of filtered-backprojection-type reconstruction algorithm as proposed in \cite{Ansorg:2012fk,Seyfried2014} using R.~Seyfried's implementation of the algorithm. As can be observed in Figure~\ref{fig:sphericalRadon}, the experimental results are similar to those of the classical Radon transform in Section \ref{sec:applications}. The FBP-type reconstruction and the total variation reconstruction suffer from severe artifacts when only few data are available.
In contrast, our method almost perfectly recovers the original image.

    \begin{figure}[t]

\def\figwidth{0.23\columnwidth}
\def\figspace{\hspace{0.05\textwidth}}
\centering
	\def\figfolder{newExperiments/sphericalRadonNew/}
	\begin{subfigure}[t]{\figwidth}
	\includegraphics[width=\columnwidth]{\figfolder recFBP}
	\caption{FBP-type \\
	 	 (PSNR: 
$\protect\input{\figfolder psnrFBP.txt},$ MSSIM: ${\protect\input{\figfolder ssimFBP.txt}}$)}
	\end{subfigure}
	\figspace
		\begin{subfigure}[t]{\figwidth}
	\includegraphics[width=\columnwidth]{\figfolder recTV}
	\caption{Total variation, $\alpha = \protect\input{\figfolder lambda.txt}$
		 (PSNR: 
$\protect\input{\figfolder psnrTV.txt},$ MSSIM: ${\protect\input{\figfolder ssimTV.txt}}$)	}
\end{subfigure}
	\figspace
		\begin{subfigure}[t]{\figwidth}
	\includegraphics[width=\columnwidth]{\figfolder recPotts}
	\caption{Our method, $\gamma = \protect\input{\figfolder gamma.txt}$
	 (PSNR: 
$\protect\input{\figfolder psnrPotts.txt},$ MSSIM: ${\protect\input{\figfolder ssimPotts.txt}},$  RI: ${\protect\input{\figfolder/randIndexPotts.txt}}$)
}
	\end{subfigure}
	\vspace{-0.2cm}
	\caption{
The proposed method reliably recovers all segments of the Shepp-Logan phantom 
from $\protect\input{\figfolder nAngles.txt}$ noisefree projections of spherical Radon data. 
}
	\label{fig:sphericalRadon}
\end{figure}

\subsection{Blurred Data}

We finally demonstrate the applicability of our method
to deblurring problems.
Here, the operator $A$ is a convolution operator.
Hence, the normal equation \eqref{eq:normalEq} 
can be solved efficiently by fast Fourier transform techniques.
In this experiment we particularly illustrate that our method can be applied to vector-valued data such as color images. 
We follow the splitting strategy as proposed in the present paper
and extend the univariate Potts problem to vector-valued data according to \cite{storath2014fast}.
It is worth mentioning that the computational effort grows only linearly in the
dimension of the vectorial data. 
For example, the cost for processing a color image is about three times the cost of processing a gray-value image.
In Figure \ref{fig:gaussianblur}, we show the joint reconstruction and segmentation  of an image blurred by a Gaussian kernel. 
(The image was taken from the Berkeley Segmentation Dataset \cite{MartinFTM01}). 
In Figure \ref{fig:motionblur}, we see 
the restoration of a traffic sign from simulated motion blur.
Motion blur is modeled as a one-dimensional convolution along the direction $v \in \R^2.$ 
Here, we use a moving average with respect to the horizontal direction.
This experiment also illustrates 
that a positive coupling-parameter sequence $\nu_k $ can improve the result.

\begin{figure}
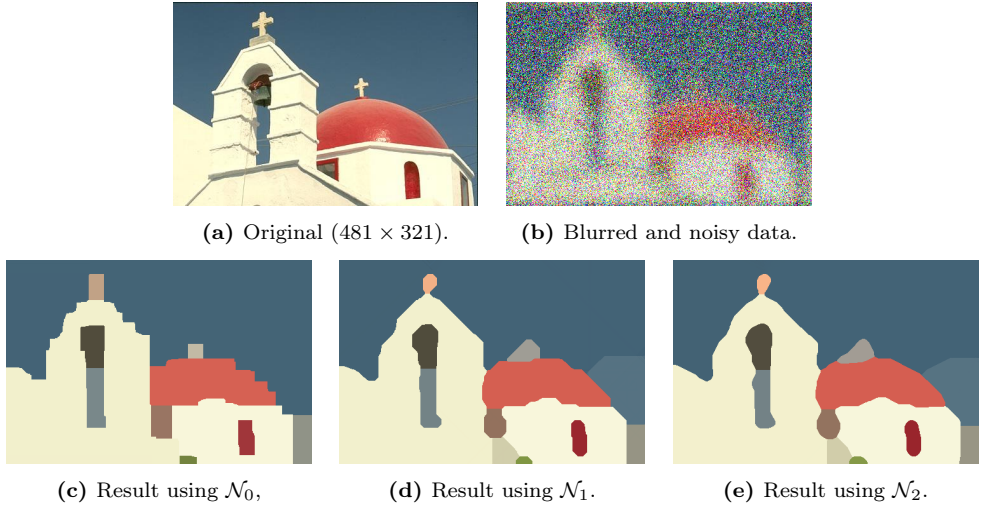

\def\figfolder{experiments/deconvolution/gaussian/}
\def\figwidth{0.31\columnwidth}
\def\figspace{\hspace{0.02\columnwidth}}
\centering
	\begin{subfigure}[t]{\figwidth}
	\includegraphics[width=\columnwidth]{\figfolder original}
	\caption{Original (${\input{\figfolder dim.txt}}$).}
	\end{subfigure}
	\figspace
	\begin{subfigure}[t]{\figwidth}
	\includegraphics[width=\columnwidth]{\figfolder noisy}
	\caption{Blurred and noisy data.}
	\end{subfigure}
	\\[1ex]
	\begin{subfigure}[t]{\figwidth}
	\includegraphics[width=\columnwidth]{\figfolder uPottsAniso}
	\caption{Result using $\Nc_0,$ }
	\end{subfigure}
	\figspace	
	\begin{subfigure}[t]{\figwidth}
	\includegraphics[width=\columnwidth]{\figfolder uPottsIso1}
	\caption{Result using $\Nc_1.$}
	\end{subfigure}
	\figspace
	\begin{subfigure}[t]{\figwidth}
	\includegraphics[width=\columnwidth]{\figfolder uPotts}
	\caption{Result using $\Nc_2.$}
	\end{subfigure}
	\caption{
Joint reconstruction and segmentation of an image blurred by  a Gaussian kernel of standard deviation $\protect\input{\figfolder stdDevKernel.txt}$ and corrupted by extreme Gaussian noise of level $\protect\input{\figfolder sigma.txt}.$ We used $\gamma = {\protect\input{\figfolder gammaAniso.txt}}$. The higher the degree of isotropy
becomes, the smoother are the segment boundaries.}
	\label{fig:gaussianblur}
\end{figure}

\begin{figure}
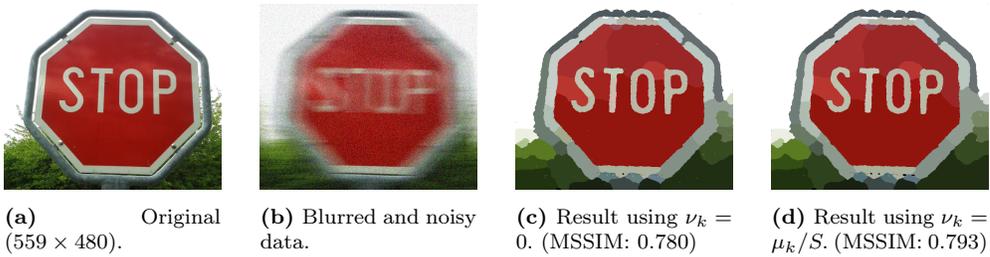

\def\figfolder{experiments/deconvolution/stop/}
\def\figwidth{0.22\columnwidth}
\def\figspace{\hfill}
\centering
	\begin{subfigure}[t]{\figwidth}
	\includegraphics[width=\columnwidth]{\figfolder original}
	\caption{Original (${\input{\figfolder dim.txt}}$).}
	\end{subfigure}
	\figspace
	\begin{subfigure}[t]{\figwidth}
	\includegraphics[width=\columnwidth]{\figfolder noisy}
	\caption{Blurred and noisy data.}
	\end{subfigure}
	\figspace
	\begin{subfigure}[t]{\figwidth}
	\includegraphics[width=\columnwidth]{\figfolder uPotts}
	\caption{Result using $\nu_k = 0.$
	(MSSIM: $\protect\input{\figfolder ssimPotts.txt})$
	}
	\end{subfigure}
	\figspace
		\begin{subfigure}[t]{\figwidth}
	\includegraphics[width=\columnwidth]{\figfolder uPottsRho}
	\caption{Result using $\nu_k =\mu_k/S.$ (MSSIM: $\protect\input{\figfolder ssimPottsRho.txt})$}
	\end{subfigure}
	\caption{
Segmentation of an image from a simulated horizontal motion blur of $\protect\input{\figfolder length.txt}$ pixel length with Gaussian noise of level $\protect\input{\figfolder sigma.txt}.$ 
The letters are recovered almost perfectly (c).
The result improves if we use the algorithm of \eqref{eq:generalADMM} with $\nu_k = \mu_k/S$ instead of $\nu_k = 0$ (d).
}
	\label{fig:motionblur}
\end{figure}

\section{Conclusion}
In this paper, 
we have developed a new splitting approach for the Potts model 
(or piecewise-constant Mumford-Shah model) for ill-posed inverse problems in imaging. 
We have presented a general discretization scheme 
which permits near-isotropic approximations of the length terms.
This discretization allowed us to split the Potts problem 
into specific subproblems that can be solved with efficient algorithms.
We have demonstrated 
the capability of our method in various imaging applications.
In particular,  our algorithm has reconstructed all segments of the Shepp-Logan phantom from only seven projections of
Radon and spherical Radon data, respectively.
Further, we  have obtained high-quality 
results from highly incomplete and noisy data.
Finally, we have demonstrated that it is applicable for joint reconstruction and segmentation 
of real tomographic data.

\appendix

\section{Proof of Equation \eqref{eq:fbpTikhonov}}
\begin{proof}[Proof of Equation \eqref{eq:fbpTikhonov}]
	Setting $u = v-z$ and $g = f - \Rc z $
 we first rewrite the cost functional as
	\begin{equation}
	\label{auxeq:fbpTikhonov}
	\norm{\Rc v - f}_2^2 + \alpha\norm{v - z}_2^2 = 
		\norm{\Rc u - g}_2^2 + \alpha\norm{u}_2^2.
	\end{equation}
In the following, we use the notation  $u_\alpha$ to denote the minimizer of the right-hand side in \eqref{auxeq:fbpTikhonov}. We first note that $u_\alpha$ satisfies the normal equation $\Rc^\ast\Rc u_\alpha + \alpha u_\alpha = \Rc^\ast g$ (see \eqref{eq:normalEq}). Further, we get from \cite[Theorem II.1.5]{Natterer86} that $\Rc^\ast\Rc u_\alpha =2(\norm{\,\bdot\,}_2^{-1}\conv \ u_\alpha).$ Therefore, by taking the Fourier transform on both sides of the normal equation and applying the convolution theorem together with \cite[Ch.V, Lemma 5.2]{Helgason99}, we obtain the following relation for the Fourier transform of $u_\alpha:$
\begin{equation}
\label{eq:Fourier Tikhnov minimizer}
	\hat u_\alpha(\xi) = \frac{\norm{\xi}_2}{4\pi+\alpha\norm{\xi}_2}\widehat{\Rc^\ast g}(\xi).
\end{equation}
Formula \eqref{eq:Fourier Tikhnov minimizer} is a special case of a formula derived in \cite[Sec. 7]{Jonas:2001kv}. Now, since $g=\Rc w'$ with $w'=w-z$, a similar argument shows that $\widehat{\Rc^\ast \Rc w'}(\xi) = 4\pi\norm{\xi}_2^{-1} \widehat{w'} (\xi)$ and, therefore, that
\begin{equation}
\label{eq:Fourier Tikhnov minimizer2}
	\hat u_\alpha(\xi) = \frac{4\pi}{4\pi+\alpha\norm{\xi}_2} \widehat{w'}(\xi).
\end{equation}

Next,  
we use  the Fourier-slice theorem \cite[Theorem II.1.1]{Natterer86}. It states that
$\widehat{\Rc_{\theta}w'}(r) = \sqrt{2\pi}\ \widehat{w'}(r\theta)$ , where $\Rc_\theta w' = \Rc w'(\theta,\cdot\ )$. Using \eqref{eq:Fourier Tikhnov minimizer2} we obtain
\begin{align*}
	u_\alpha(x) &= \frac{1}{2\pi} \int_{\R^2}  \widehat{u}_\alpha(\xi) e^{ix\cdot\xi}\d\xi\\
	&= \frac{1}{2\pi} \int_{\R^2} \frac{4\pi}{4\pi+\alpha\norm{\xi}}_2  \widehat{w'}(\xi) e^{ix\cdot\xi}\d\xi\\
	&= \int_{S^{1}} 2\int_{0}^\infty  \frac{1}{4\pi+\alpha\abs{r}} \widehat{w'}
	(r\theta)e^{irx\cdot\theta}r\d r\d\theta\\
	&= \int_{S^{1}} \int_{-\infty}^\infty  \frac{\abs{r}}{4\pi+\alpha\abs{r}} \widehat{w'}(r\theta)e^{irx\cdot\theta}\d r\d\theta\\
	&= \int_{S^{1}} \frac{1}{\sqrt{2\pi}}\int_{-\infty}^\infty  \frac{\abs{r}}{4\pi+\alpha\abs{r}}\widehat{\Rc_{\theta}w'}(r)e^{irx\cdot\theta}\d r\d\theta\\
	&=  \int_{S^{1}}H_\alpha \Rc w'(\theta,x\cdot\theta) \d\theta\\
	&=  \Rc^\ast H_\alpha \Rc w'(x).
\end{align*}
Since $\Rc w'=\Rc w - \Rc z = f - \Rc z$, the assertion follows.
\end{proof}

\section*{Acknowledgements}
The research leading to these results has received funding from the European Research Council under the European Union's Seventh Framework Programme (FP7/2007-2013) / ERC grant agreement no.~$267439.$
The second and the third author acknowledge support by the Helmholtz Association within the young investigator group VH-NG-526. 
We would like to thank 
Eric Todd Quinto for valuable discussions on the topic of incomplete data tomography,
Philippe Th\'evenaz for a valuable discussion on medical imaging,
 Ruben Seyfried for providing us the implementation of his summability method for PAT, 
and Jeffrey Fessler for making his PET dataset publicly available.
Moreover, we would like to thank the anonymous reviewers
for their valuable comments and suggestions which 
helped to improve the paper.

\section*{References}

\bibliographystyle{abbrv}
\bibliography{references}

\end{document}